\newtheorem{lemma}{Lemma}[section]
\newtheorem{theorem}[lemma]{Theorem}
\newtheorem{claim}{\noindent Claim}[section]
\begin{document}

\setstretch{1.35} 
	
\title{Optimal chromatic bound for ($P_2\cup P_4$, HVN)-free graphs}

\begingroup
\renewcommand\thefootnote{}
\footnotetext{*\;Corresponding author. }
\footnotetext{Email: lchendh@connect.ust.hk (L. Chen), hwanghj@connect.ust.hk (H. Wang).}
\endgroup
{\author[1]{Lizhong Chen\textsuperscript{1} \quad Hongyang Wang\textsuperscript{*}}

\affil{Department of Mathematics,

Hong Kong University of Science and Technology,

Clear Water Bay, Hong Kong}

\date{\today}
\maketitle

\begin{abstract}

The HVN is a graph formed by removing two edges incident to the same vertex from the complete graph $K_5$. In this paper, we prove that every ($P_2\cup P_4$, HVN)-free graph $G$ satisfies $\chi(G)\leq\lceil\frac{4}{3}\omega(G)\rceil$ when $\omega(G)\ge4$, where $\chi(G)$ and $\omega(G)$ denote the chromatic number and clique number of $G$, respectively. Furthermore, this bound is optimal for every $\omega(G)\ge4$. Constructions demonstrating the optimality of the bound are provided. Our work unifies several previously known results on $\chi$-binding functions for several graph classes~\cite{P2P4DIA,P2P3, OURS,RANCHEN,2K2HVN,P6D2,P2P3DIA}.

\end{abstract}

\textbf{Mathematics Subject Classification}: 05C15, 05C17, 05C69, 05C75
				
\textbf{Keywords}: Graph colouring; $P_2\cup P_4$-free graphs; HVN-free graphs.

\section{Introduction}

All graphs considered in this paper are finite and simple. We follow \cite{WEST} for undefined notations and terminology. Let $G$ be a graph with vertex set $V(G)$ and edge set $E(G)$. Two vertices $u,v\in V(G)$ are \emph{adjacent} if and only if $uv \in E(G)$. We denote by $u\sim v$ if $u,v$ are adjacent and $u\nsim v$ if otherwise. A vertex $v$ and an edge $e$ are said to be \emph{incident} with each other if $v$ is an endpoint of $e$. For a graph $G$, the \emph{complement} $\overline{G}$ is the graph with vertex set $V(G)$ and $uv \in \overline{G}$ if and only if $uv \notin G$. For two graphs $H$ and $G$, $H$ is an \emph{induced subgraph} of $G$ if $V(H)\subseteq V(G)$, and $uv \in E(H)$ if and only if $uv \in E(G)$. We say that a graph $G$ \emph{contains} $H$ if $G$ has an induced subgraph isomorphic to $H$, and say that $G$ is $H$-\emph{free} if otherwise. For a family of graphs $\{H_1,H_2,\ldots\}$, $G$ is $(H_1,H_2,\ldots)$-free if G is $H$-free for every $H\in\{H_1,H_2,\ldots\}$.  

Let $P_n$ and $K_n$ denote the \emph{path}, and \emph{complete graph} on $n$ vertices respectively. The \emph{HVN} (an abbreviation of the German \emph{Haus vom Nikolaus}) is a graph formed by removing two edges incident to the same vertex from the complete graph $K_5$. For two vertex-disjoint graph $G_1$ and $G_2$, the \emph{union} $G_1\cup G_2$ is the graph with $V(G_1\cup G_2)=V(G_1)\cup V(G_2)$ and $E(G_1\cup G_2)=E(G_1)\cup E(G_2)$. We denote by $G[X]$ the subgraph of $G$ induced by a subset $X \subseteq V(G)$, and when the context is clear, we identify $X$ with $G[X]$. A set $X$ is called a \emph{clique} (resp.\ \emph{stable set}) if $G[X]$ is a complete graph (resp.\ has no edges). The \emph{clique number} of $G$, denoted $\omega(G)$, is the size of a largest clique in $G$; we write simply $\omega$ when the graph is clear from context.
\input{P2P4_HVN.tpx}

A \emph{$k$-colouring} is a function $c: V\rightarrow\{1,2,\ldots,k\}$ such that $c(u)\neq c(v)$ whenever $u$ and $v$ are adjacent in $G$. For a subset $A \subseteq V(G)$, we use $c(A)$ to denote the set of colours assigned to the vertices in $A$ under the colouring $c$. A graph is \emph{k-colourable} if it admits a k-colouring. The smallest integer $k$ such that a given graph $G$ is $k$-colourable is called its \emph{chromatic number}, denoted by $\chi(G)$. Obviously, $\chi(G)\geq\omega(G)$. A graph $G$ is called \emph{perfect} if we have $\chi(H)=\omega(H)$ for every induced subgraph $H$ of $G$. 

Gy\'{a}rf\'{a}s \cite{GY} introduced the notion of $\chi$-boundedness as a natural extension of perfect graphs in 1975. A family $\mathcal{G}$ of graphs is said to be $\chi$-\emph{bounded} if there exists a real-valued function $f$ such that $\chi(G)\leq f(\omega(G))$ holds for every graph $G\in \mathcal{G}$. The function $f$ is called a \emph{binding function} for $\mathcal{G}$. Clearly, the class of perfect graphs is $\chi$-bounded and the binding function is the identity function $f(x)=x$. 

We briefly review some results on $\chi$-boundedness. A \emph{hole} is an induced cycle of length at least four. An \emph{antihole} is the complement of a hole. A hole or antihole is \emph{odd} or \emph{even} if it is of odd or even length, respectively. In their seminal paper \cite{SPGT}, Chudnovsky, Robertson, Seymour and Thomas proved the famous Strong Perfect Graph Theorem: a graph is perfect if and only if it is (odd hole, odd antihole)-free. For the class of graphs that forbid only odd holes, a $\chi$-binding function is known to exist; however, the best known such function is double-exponential~\cite{ODDHOLE}. On the other hand, if we only forbid even holes, a linear binding function $f(x)=2x-1$ exists~\cite{EVENHOLE}, that is, every even hole-free graph $G$ satisfies $\chi(G)\leq 2\omega(G)-1$. The existence of $\chi$-binding functions for classes of graphs without holes of various lengths has attracted considerable interest in recent years.

Another line of research is the study of $H$-free graphs for a fixed graph $H$. A classical result of Erd\H{o}s~\cite{ERDOS} shows that for any two positive integers $k,l\geq3$, there exists a graph with $\chi(G)\geq k$ and no cycles of length less than $l$. i.e. the class of $H$-free graphs is not $\chi$-bounded if $H$ contains a cycle. The famous Gy\'{a}rf\'{a}s--Sumner conjecture~\cite{GY, SUMNER} states that the converse is also true. This motivates us to study the chromatic number of $F$-free graphs, where $F$ is a forest (a disjoint union of trees). Gy\'{a}rf\'{a}s~\cite{GY2} proved the conjecture for $F=P_t$.

\begin{theorem}\cite{GY2}
    Every $P_t$-free graph $G$ satisfies $\chi(G)\leq (t-1)^{\omega(G)-1}$.
\end{theorem}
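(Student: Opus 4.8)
The plan is to prove the statement by induction on $\omega=\omega(G)$. When $\omega=1$ the graph $G$ is edgeless and $\chi(G)=1=(t-1)^{0}$, so I assume $\omega\ge 2$ (note this forces $t\ge 3$, since a graph with an edge is not $P_2$-free) and that the bound holds for all $P_t$-free graphs of clique number $\omega-1$. Set $M:=(t-1)^{\omega-2}$. The only consequence of the induction hypothesis I use is: for every vertex $v$, the graph $G[N(v)]$ is $P_t$-free with $\omega(G[N(v)])\le\omega-1$ (a clique in $N(v)$ together with $v$ is a clique of $G$), hence $\chi(G[N(v)])\le M$. Since $\chi(G)$ is the maximum of $\chi$ over the components of $G$, I may assume $G$ is connected, and I will show $\chi(G)\le (t-1)M=(t-1)^{\omega-1}$.

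Suppose not, so $\chi(G)\ge (t-1)M+1$. The idea is a greedy ``Gy\'arf\'as path'' argument: trace out an induced path vertex by vertex, always stepping into a \emph{heavy} connected piece invisible to the path built so far, so that $P_t$-freeness caps the length of the path and hence $\chi(G)$. I would build an induced path $v_1v_2\cdots$ and connected vertex sets $D_1\supseteq D_2\supseteq\cdots$ maintaining the invariant that $v_j$ has a neighbour in $D_{j-1}$, that $v_1,\dots,v_{j-1}$ have none, that $v_j\notin D_{j-1}$, and that $\chi(G[D_j])\ge (t-1-j)M+1$. To start, pick any $v_1$; from $\chi(G)\le\chi(G[N(v_1)])+\chi(G-N(v_1))\le M+\chi(G-N(v_1))$ some component $D_1$ of $G-N(v_1)$ has $\chi(G[D_1])\ge\chi(G)-M\ge (t-2)M+1\ge 2$, so $D_1$ is not the isolated vertex $\{v_1\}$ and $v_1\notin D_1$; connectivity (every edge leaving $D_1$ lands in $N(v_1)$) then yields $v_2\in N(v_1)$ adjacent to $D_1$. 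For the inductive step, given $v_1\cdots v_j$ and $D_{j-1}$, the splitting $\chi(G[D_{j-1}])\le\chi(G[D_{j-1}\cap N(v_j)])+\chi(G[D_{j-1}\setminus N(v_j)])\le M+\chi(G[D_{j-1}\setminus N(v_j)])$ gives a component $D_j$ of $D_{j-1}\setminus N(v_j)$ with $\chi(G[D_j])\ge\chi(G[D_{j-1}])-M$, and connectivity of $D_{j-1}$ yields $v_{j+1}\in D_{j-1}$ adjacent both to $v_j$ and to $D_j$; routine checks confirm that $v_1\cdots v_{j+1}$ is again an induced path (its vertices are pairwise distinct and no two non-consecutive ones are adjacent) and that the invariant persists. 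Since $\chi(G[D_{j}])\ge (t-1-j)M+1\ge 1$ for every $j\le t-1$, the construction never stalls before producing $v_t$, and $v_1,\dots,v_t$ is an induced $P_t$ — contradicting $P_t$-freeness. Hence $\chi(G)\le (t-1)^{\omega-1}$.

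The genuinely routine steps are the elementary inequality $\chi(H)\le\chi(H[X])+\chi(H-X)$ and the verifications that each $D_j$ is connected, is reachable only through the current endpoint $v_j$, and that the path stays induced. The point requiring real care — and the crux of obtaining the \emph{sharp} exponent rather than $(t-1)^{\omega-1}+O(1)$ — is the exact chromatic bookkeeping: the drop at every step, \emph{including the first}, must be at most $M$, not $M+1$. This is precisely why the first step deletes the \emph{open} neighbourhood $N(v_1)$ (not $N[v_1]$) and why one must record that the heavy component automatically avoids $v_1$; deleting $N[v_1]$ would cost an extra colour and yield only the weaker $(t-1)^{\omega-1}+1$. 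I expect this tightness accounting, together with organizing the invariant so that the greedy extension always succeeds, to be the main technical obstacle.
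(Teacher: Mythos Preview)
Your argument is correct: this is precisely the classical Gy\'arf\'as path argument, carried out with the right bookkeeping (induction on $\omega$, peeling off $N(v_j)$ at each step at a cost of at most $M=(t-1)^{\omega-2}$ colours, and using $P_t$-freeness to bound the number of steps by $t-1$). The care you take with the first step --- removing the open neighbourhood and checking that the heavy component necessarily excludes $v_1$ --- is exactly what is needed to hit $(t-1)^{\omega-1}$ on the nose rather than $(t-1)^{\omega-1}+1$.

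There is, however, nothing in the paper to compare your proof against: the theorem is only \emph{quoted} from Gy\'arf\'as \cite{GY2} as background, with no proof given. So while your write-up is sound and matches the standard proof in the literature, the paper itself offers no argument here.
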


Note that this binding function is exponential, it is natural to ask whether the exponential bound can be improved to a polynomial bound for $P_t$-free graphs. This has proven to be a challenging problem, and little progress has been made thus far. Since $P_t$ is a connected graph and Gy\'{a}rf\'{a}s--Sumner conjecture concerns forests, this naturally leads us to ask what happens when we forbid a disconnected forest, such as $P_2 \cup P_t$.

Indeed, the problem of colouring $(P_2\cup P_t)$-free graphs for $t \in \{2,3,4\}$, along with their subclasses, has been studied extensively in recent years. The case $t=2$ has been well studied; see~\cite{SURVEY} for a comprehensive survey. For $t \in \{3,4\}$, Bharathi and Choudum~\cite{P2P3} established the best-known binding function for $(P_2\cup P_3)$-free graphs, which also applies to the $(P_2\cup P_4)$-free class.

\begin{theorem}\cite{P2P3}\label{LP2P4}
For any $(P_2\cup P_3)$-free or $(P_2\cup P_4)$-free graph $G$, $\chi(G)\leq \frac{\omega(G)(\omega(G)+1)(\omega(G)+2)}{6}$.
\end{theorem}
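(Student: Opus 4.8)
Since every $(P_2\cup P_3)$-free graph is also $(P_2\cup P_4)$-free — an induced $P_2\cup P_3$ lies inside every induced $P_2\cup P_4$ — it suffices to establish the bound for $(P_2\cup P_4)$-free graphs. Write $\omega=\omega(G)$ and $f(\omega)=\binom{\omega+2}{3}=\frac{\omega(\omega+1)(\omega+2)}{6}$, and argue by induction on $\omega$. The cases $\omega\le 2$ are easy and can be checked directly (for $\omega=2$ one needs $\chi(G)\le f(2)=4$), so assume $\omega\ge 3$.

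Fix a $(P_2\cup P_4)$-free graph $G$ with $\omega(G)=\omega\ge 3$, which then contains an edge $uv$. Partition $V(G)\setminus\{u,v\}$ into $W=N(u)\cap N(v)$, $A=N(u)\setminus N[v]$, $B=N(v)\setminus N[u]$, and the ``remote'' set $R=V(G)\setminus N[\{u,v\}]$. Two observations get the induction going. First, $G[R]$ is $P_4$-free: an induced $P_4$ inside $R$, together with the edge $uv$ (neither of whose endpoints has a neighbour in $R$), would form an induced $P_2\cup P_4$; hence $G[R]$ is a cograph, so it is perfect and $\chi(G[R])=\omega(G[R])\le\omega$. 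Second, since $u$ and $v$ are each adjacent to every vertex of $W$, any clique of $G[W]$ extends by $u$ and $v$, so $\omega(G[W])\le\omega-2$; as $G[W]$ is $(P_2\cup P_4)$-free, the induction hypothesis gives $\chi(G[W])\le f(\omega-2)=\binom{\omega}{3}$.

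Now use the identity $f(\omega)=\binom{\omega}{3}+\omega^{2}$. Colour $G[W]$ with a private palette of $\binom{\omega}{3}$ colours; it then suffices to colour $H:=G[\{u,v\}\cup A\cup B\cup R]$ with at most $\omega^{2}$ colours. The set $R$ is already under control, and $\omega(G[A])\le\omega-1$, $\omega(G[B])\le\omega-1$ since $A\cup\{u\}$ and $B\cup\{v\}$ are clique-extensions. The real work is to colour $G[\{u,v\}\cup A\cup B]$, together with the vertices of $R$ attached to it, within the $\omega^{2}$ budget, and this is where $(P_2\cup P_4)$-freeness must be used in earnest: because $u$ is adjacent to all of $A$ but to none of $B\cup R$, and symmetrically for $v$, a ``long'' induced path straddling $A$, $B$, $R$, or a $2K_2$ with one edge in $A$ and one in $B$, tends to be completed through $u$, $v$, or $R$ into a forbidden $P_2\cup P_4$. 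The target structural statement is that $G[A\cup B]$ — possibly after absorbing the $R$-vertices that see it — has clique number at most $\omega-1$ and is $2K_2$-free (or close enough), so that the classical fact that $2K_2$-free graphs satisfy $\chi\le\binom{\omega+1}{2}$ applies to the bulk of it; combined with the cograph structure on $R$ and a little bookkeeping for $u$ and $v$, this should yield the $\omega^{2}$ colours.

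I expect that last step to be the main obstacle: extracting a quadratic bound for the private-neighbourhood part requires a genuine analysis of the edges within $A\cup B$ and between $A\cup B$ and $R$, since merely applying the induction hypothesis to $G[N(u)]$ (which has clique number $\le\omega-1$) only gives the far weaker cubic estimate $\binom{\omega+1}{3}$. The $(P_2\cup P_3)$-free case runs in parallel and is somewhat cleaner, as there $G[R]$ is a disjoint union of cliques. Finally, that $f(\omega)=\binom{\omega+2}{3}$ is precisely the right output is transparent from $f(\omega)=\binom{\omega+1}{3}+\binom{\omega+1}{2}=f(\omega-1)+\binom{\omega+1}{2}$: the recursion peels off, at each step, a part colourable with $\binom{\omega+1}{2}$ colours while lowering the clique number by one.
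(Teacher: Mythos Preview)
The paper does not prove this theorem; it is quoted from Bharathi and Choudum~\cite{P2P3} and used as a black box, so there is no in-paper argument to compare against. What can be assessed is whether your sketch stands on its own, and it does not: you yourself flag the decisive step as unfinished, and the mechanism you propose for it is false.

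Your plan is to colour $W=N(u)\cap N(v)$ with $\binom{\omega}{3}$ colours by induction (this part is fine) and then colour $\{u,v\}\cup A\cup B\cup R$ with the remaining $\omega^{2}$ colours by arguing that $G[A\cup B]$ is ``$2K_2$-free (or close enough)'' and invoking Wagon's $\binom{\omega+1}{2}$ bound. But $G[A\cup B]$ need not be $2K_2$-free. Take $V(G)=\{u,v,a_1,a_2,b_1,b_2\}$ with edge set $\{uv,\,ua_1,\,ua_2,\,a_1a_2,\,vb_1,\,vb_2,\,b_1b_2\}$. This graph has six vertices and seven edges, so it is not $P_2\cup P_4$ and hence is $(P_2\cup P_4)$-free; yet $A=\{a_1,a_2\}$, $B=\{b_1,b_2\}$, and $G[A\cup B]\cong 2K_2$. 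The construction scales (replace $\{a_1,a_2\}$ and $\{b_1,b_2\}$ by larger cliques complete to $u$ and $v$ respectively), so this is not a small-$\omega$ accident. Without a working replacement for the $2K_2$-free claim you have no control on $\chi(G[A\cup B])$ beyond the cubic inductive estimate you already dismissed, and the $\omega^{2}$ budget for $H$ is unjustified. The identity $f(\omega)=\binom{\omega}{3}+\omega^{2}$ is correct, but it does not by itself suggest how to realise the $\omega^{2}$ part.

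A secondary gap: the base case $\omega=2$ is not something one ``checks directly''. The assertion that every triangle-free $(P_2\cup P_4)$-free graph is $4$-colourable is itself a nontrivial result (tight at the Mycielski--Gr\"otzsch graph), and your induction relies on it as the anchor.
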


Following this work, subsequent research has pursued improved bounds for specific subclasses. We investigate the class of $(P_2 \cup P_4$, HVN)-free graphs, covering several important subclasses, particularly ($P_2 \cup P_3$, diamond)-free graphs, ($P_2 \cup P_4$, diamond)-free graphs, and ($P_2 \cup P_3$, HVN)-free graphs. Accordingly, our literature review will focus on these three subclasses, while also noting key results for HVN-free graphs.

For the class of ($P_2\cup P_3$, diamond)-free graphs, Bharathi and Choudum~\cite{P2P3} showed that $\chi(G)\leq4$ when $\omega(G)=2$, a bound which is tight, and that the graph is perfect for $\omega(G)\geq5$. Later, Karthick and Mishra~\cite{P6D2} proved the optimal bound $\chi(G)\leq6$ for $\omega(G)=3$, and Prashant, Francis and Raj~\cite{P2P3DIA} established $\chi(G)=4$ for $\omega(G)=4$. Several other subclasses of $(P_2\cup P_3, R)$-free graphs have also been investigated, for forbidden graphs $R$ such as the crown~\cite{CROWN}, gem~\cite{gem}, house~\cite{house}, and HVN~\cite{P2P3DIA}.

For subclasses of $P_2 \cup P_4$-free graphs, Chen and Zhang~\cite{RANCHEN} established chromatic bounds for ($P_2\cup P_4$, diamond)-free graphs: $\chi(G)\leq4$ for $\omega(G)=2$, $\chi(G)\leq7$ for $\omega(G)=3$, $\chi(G)\leq9$ for $\omega(G)=4$, and $\chi(G)\leq2\omega(G)+1$ for $\omega(G)\geq5$. In the same work, they also derived that every ($P_2\cup P_4$, gem)-free graph $G$ satisfies $\chi(G)\leq3\omega(G)-2$, and that every ($P_2\cup P_4$, butterfly)-free graph $G$ satisfies $\chi(G)\leq\frac{1}{2}(\omega(G)^2+3\omega(G)-2)$. Subsequent work by Angeliya, Huang and Karthick~\cite{P2P4DIA} improved these bounds for ($P_2\cup P_4$, diamond)-free graphs with $\omega(G)\geq3$, demonstrating that $\chi(G)\leq\max\{6,\omega(G)\}$; their work also settled the case $\omega(G)=4$ by proving $\chi(G)=4$. Most recently, Chen and Wang~\cite{OURS} proved that every ($P_2\cup P_4$, diamond)-free graph $G$ satisfies $\chi(G)=\omega(G)$ for $\omega(G)\ge4$, providing a complete solution for the $\chi$-binding function of this class.

For HVN-free graphs, several $\chi$-binding functions have been established. Karthick and Mishra~\cite{2K2HVN} proved that every $(2K_2$, HVN)-free graph satisfies $\chi(G) \le \omega(G) + 3$. In the same work~\cite{P2P3DIA}, Prashant, Francis, and Raj extended their analysis to $(P_2 \cup P_3$, HVN)-free graphs, obtaining the optimal bound $\chi(G) \leq \omega(G) + 1$ for $\omega(G) \geq 4$. Further results include those of Song and Xu~\cite{ODDHVN}, who demonstrated that every (odd hole, HVN)-free graph $G$ satisfies $\chi(G) \le \omega(G) + 1$, and Xu~\cite{P5HVN}, who showed that $(P_5$, HVN)-free graphs satisfy $\chi(G) \le \max\{\min\{16, \omega(G)+3\}, \omega(G)+1\}$.
\begin{figure}
    \centering
        \includegraphics[width=0.35\linewidth]{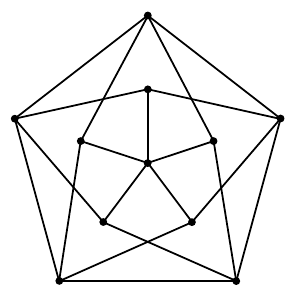}
        \caption{The Mycielski--Gr\"otzsch graph.}
    \label{GRO}
\end{figure}
\subsection{Our Contributions}

In this paper, we extend the chromatic bounds known for ($2K_2$, diamond)-free graphs~\cite{P2P3}, ($2K_2$, HVN)-free graphs~\cite{2K2HVN}, ($P_2\cup P_3$, diamond)-free graphs~\cite{P2P3,P6D2,P2P3DIA}, ($P_2\cup P_3$, HVN)-free graphs~\cite{P2P3DIA}, and ($P_2\cup P_4$, diamond)-free graphs~\cite{P2P4DIA,OURS,RANCHEN} by proving the following $\chi$-binding functions for ($P_2\cup P_4$, HVN)-free graphs.

\begin{theorem}\label{THM2}

Let $G$ be a ($P_2\cup P_4$, HVN)-free graph. Then
\begin{displaymath}
\chi(G)\leq\left\{ \begin{array}{ll}
4 & \textrm{if $\omega(G)=2$}\\
10 & \textrm{if $\omega(G)=3$}\\
\lceil\frac{4}{3}\omega(G)\rceil  & \textrm{if $\omega(G)\geq4$}\\
\end{array} \right.
\end{displaymath}
Moreover, the bounds are tight for every $\omega(G)\neq 3$.
\end{theorem}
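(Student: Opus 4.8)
The plan is to handle the three ranges of $\omega(G)$ more or less separately, with the bulk of the work devoted to the generic case $\omega(G)\ge 4$, where we must show $\chi(G)\le\lceil\frac43\omega\rceil$. For $\omega=2$ the graph is $(P_2\cup P_4)$-free with no triangle, hence has bounded structure and one reads off $\chi\le4$ from Theorem~\ref{LP2P4} (which already gives $\chi\le 4$ when $\omega=2$); for $\omega=3$, Theorem~\ref{LP2P4} gives $\chi\le10$ directly. So the real content is the case $\omega\ge4$, and the rest of the proposal concerns it.

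The main tool will be a structural analysis around a maximum clique $K$, $|K|=\omega$. I would partition $V(G)\setminus K$ according to the trace of each vertex's neighbourhood on $K$: for $v\notin K$ let $N_K(v)=N(v)\cap K$. HVN-freeness is a powerful local restriction here: if some $v$ has $|N_K(v)|\ge\omega-1$ then $K\cup\{v\}$ contains a $K_{\omega+1}$ minus at most one edge; pushing this with a second such vertex quickly forces an HVN unless the high-degree vertices are very tightly organised, so one expects that vertices with $|N_K(v)|\ge \omega-2$ are scarce and form a clique-like pattern. The $(P_2\cup P_4)$-freeness is then used globally: any edge disjoint from a long structure is forbidden, which is exactly what controls components and anticomplete pairs outside $K$. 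Concretely, I would (i) show the set $Y$ of vertices with $|N_K(v)|\le\omega-3$ induces a graph of small clique number (each such vertex together with an appropriate part of $K$ would otherwise build too large a clique), and combine this with $(P_2\cup P_4)$-freeness and the inductive-type bounds for smaller $\omega$ to colour $G[Y]$ with few colours; (ii) show the vertices with $|N_K(v)|\in\{\omega-2,\omega-1\}$ together with $K$ form a graph whose structure is rigid enough (a blow-up of a small pattern, essentially forced by HVN-freeness) that it is colourable with $\lceil\frac43\omega\rceil$ colours; (iii) argue the interaction between these parts costs no extra colours, e.g.\ via a clever clique cover or by exhibiting an explicit assignment. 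A likely cleaner route is to induct on $\omega$: remove a carefully chosen clique or stable set (or a pair of ``twin'' vertices) to reduce $\omega$ by $3$ and reuse the bound $\lceil\frac43(\omega-3)\rceil=\lceil\frac43\omega\rceil-4$, the discrepancy $4$ being exactly the budget for recolouring the removed part; this matches the shape of the extremal examples, which are built from the Mycielski--Gr\"otzsch graph (Figure~\ref{GRO}) and substitutions into it, a $4$-chromatic triangle-free gadget whose role is to contribute the extra $\frac13\omega$ over $\omega$.

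I expect the main obstacle to be case (ii): pinning down exactly which configurations of the ``heavy'' vertices (degree $\ge\omega-2$ on $K$) survive the HVN constraint, and checking that every such configuration can be $\lceil\frac43\omega\rceil$-coloured in a way compatible with the colouring of $Y$. This is where the $\frac43$ coefficient is genuinely tight, so the argument cannot afford to be lossy; one must essentially classify the ``$(P_2\cup P_4$, HVN)-free graphs that are close to a clique blow-up'' and colour each optimally. A secondary difficulty is the boundary behaviour at $\omega=4$ and $\omega=5$, where $\lceil\frac43\omega\rceil$ equals $6$ and $7$ respectively and the asymptotic structure has not yet ``kicked in,'' so these small cases may need to be verified by hand (or via a short separate structural lemma) rather than falling out of the induction.

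Finally, for tightness when $\omega\ge4$: I would exhibit, for each such $\omega$, a $(P_2\cup P_4$, HVN)-free graph with $\chi=\lceil\frac43\omega\rceil$. The natural construction substitutes cliques of appropriate sizes (summing to $\omega$, balanced so that three independent ``slots'' of the Mycielski--Gr\"otzsch graph each receive roughly $\omega/3$ vertices) into the vertices of a small $4$-chromatic triangle-free template, and one checks by a short direct argument that the result is $(P_2\cup P_4)$-free and HVN-free while its fractional/ordinary chromatic number reaches the stated ceiling; the case $\omega=2$ tightness is witnessed by the Mycielski--Gr\"otzsch graph itself, which is triangle-free with $\chi=4$.
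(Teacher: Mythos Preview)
Your outline diverges from the paper at the very first structural step, and the divergence matters. The paper does \emph{not} partition around a maximum clique $K$; it fixes a maximum induced complete $\omega$-partite subgraph $A$ (so all the ``heavy'' vertices, those with $|N_K(v)|=\omega-1$, are already absorbed into the parts $A_i$) and shows via HVN-freeness that every vertex of $H=G-A$ has neighbours in at most one part $A_i$. This yields the clean partition $C_0,C_1,\dots,C_\omega$ and the perfectness of any $G[\,C_0\cup\bigcup_{i\ne p,q}C_i\,]$ by $P_4$-freeness, whence $\chi(G)\le\max\{2\omega(H),\omega\}$. Only when $\omega(H)\ge4$ is more work needed, and then the paper repeats the trick: take a maximum complete $k$-partite subgraph $B$ inside $H$, partition $H\setminus B$ into the cells $R_j$, $S_i^j$, $T_i$, $Z$, and prove a list of anticompleteness properties (P1)--(P8) that let one write down an explicit $\lceil\tfrac43\omega\rceil$-colouring (with a separate argument for $\omega=4$ versus $\omega\ge5$, and a preliminary Claim handling the case where some $a\in V(A)$ sees two parts of $B$). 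Your clique-based plan, by contrast, never isolates this multipartite object; the promised induction ``reduce $\omega$ by $3$, free up $4$ colours'' has no candidate for what to delete, and the dichotomy $|N_K(v)|\le\omega-3$ versus $|N_K(v)|\in\{\omega-2,\omega-1\}$ is miscalibrated (HVN-freeness already forces $|N_K(v)|\in\{0,1,\omega-1\}$ when $\omega\ge4$).

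Your tightness construction is also not the paper's and, as stated, does not work. Substituting cliques into the Mycielski--Gr\"otzsch graph will not stay $(P_2\cup P_4)$-free (that graph contains induced copies of $P_2\cup P_4$), and even ignoring that, no balanced blow-up of it has $\chi/\omega$ near $4/3$; its fractional chromatic number is $29/11$. The paper's extremal graph is entirely different: take two disjoint complete $\omega$-partite graphs $A$ and $B$, each with all parts of size exactly $\omega$, and join them by the perfect matching $a_j^i\!\sim\! b_i^j$. This has $\omega(G)=\omega$ (each vertex has a single neighbour on the other side), and a short counting argument shows that any proper colouring needs at least $\lceil\tfrac43\omega\rceil$ colours: with $\lceil\tfrac43\omega\rceil-1$ colours at least $\omega-\lceil\tfrac13\omega\rceil+1$ parts of $A$ are monochromatic, those colours are then forbidden on every $B_j$, and there are not enough remaining colours for the monochromatic parts of $B$.
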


For $\omega(G)=2$, the bound $\chi(G)=4$ is attained by the Mycielski--Gr\"otzsch graph (see Figure~\ref{GRO}). While for $\omega(G)\geq 4$, the optimality of the bound $\chi(G)\leq\lceil\frac{4}{3}\omega(G)\rceil$ is established at the end of Section 4 through a constructive proof: we exhibit a class of graphs with clique number $\omega$ on $2\omega^2$ vertices that attains this bound.

\section{Preliminaries}

For a given positive integer $n$, we use $[n]$ to denote the set $\{1,2,\ldots,n\}$. For a vertex $v\in V(G)$, the \emph{neighbourhood} of $v$, denoted by $N_G(v)$, is the set of all vertices adjacent to $v$, and can be simplified to $N(v)$ when the graph $G$ is clear from context.  For a subset $S\subseteq V(G)$, the neighbourhood of $S$, denoted by $N_G(S)$ (or $N(S)$), is the set of all vertices adjacent to some vertex in $S$. The graph obtained by deleting a vertex set $S \subseteq V(G)$ and all incident edges is denoted by $G - S$. For two disjoint vertex sets $A$ and $B$, we let $[A, B]$ denote the set of edges between $A$ and $B$. We say that $A$ is \emph{complete} (resp. \emph{anticomplete}) to $Y$ if $|[X,Y]|=|X||Y|$ (resp. $[X,Y]=\emptyset$). A graph is called \emph{$k$-partite} if its vertex set can be partitioned into $k$ \emph{parts} such that no two vertices within the same part are adjacent. A $2$-partite graph is called \emph{bipartite}. A $k$-partite graph is \emph{complete} if every two vertices from different parts are adjacent. A \emph{matching} in $G$ is a set of edges without common vertices. The vertices incident to the edges of a matching $M$ are \emph{saturated} by $M$, and a \emph{perfect matching} is a matching that saturates every vertex of $G$.

The following lemmas will be used several times in the sequel.

\begin{lemma}\cite{P4}\label{P4}
    Every $P_4$-free graph is perfect.
\end{lemma}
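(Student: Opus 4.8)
The plan is to reduce perfection to the single identity $\chi(G)=\omega(G)$ and then prove the latter via the recursive (cograph) structure of $P_4$-free graphs. Since being $P_4$-free is a hereditary property (an induced subgraph of a $P_4$-free graph contains no induced $P_4$), it suffices to establish $\chi(G)=\omega(G)$ for every $P_4$-free graph $G$: applying this to each induced subgraph $H$ of a given $P_4$-free graph (which is itself $P_4$-free) then gives $\chi(H)=\omega(H)$, which is exactly perfection by definition.

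The engine of the proof is the following dichotomy, which I would prove first: \emph{every $P_4$-free graph $G$ with $|V(G)|\ge 2$ is disconnected or has disconnected complement $\overline G$.} I would prove this by induction on $n=|V(G)|$, using that $P_4$ is self-complementary, so $G$ is $P_4$-free if and only if $\overline G$ is; this lets me argue up to complementation. Deleting a vertex $v$, the graph $G-v$ is $P_4$-free on $n-1\ge 2$ vertices, so by induction $G-v$ or $\overline{G-v}=\overline G-v$ is disconnected; complementing if necessary, assume $G-v$ is disconnected with components $C_1,\dots,C_m$, $m\ge 2$. The heart of the argument is to show that $v$ is complete or anticomplete to each $C_i$: if instead $v$ had both a neighbour $x$ and, by connectivity of $C_i$, an adjacent non-neighbour $y$ inside some $C_i$, then taking a neighbour $z$ of $v$ in another component, the four vertices $y,x,v,z$ induce a $P_4$ (the edges are $yx$, $xv$, $vz$, and $yv$, $yz$, $xz$ are non-edges), a contradiction; and if $v$ has no neighbour outside $C_i$, then any other component is already a full component of $G$, so $G$ is disconnected. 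Once every component is pure, either some component is anticomplete to $v$ (forcing $G$ disconnected) or $v$ is universal (forcing $\overline G$ disconnected), completing the induction.

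Granting the dichotomy, I would prove $\chi(G)=\omega(G)$ by a second induction on $|V(G)|$. The single-vertex case is trivial. For $|V(G)|\ge 2$, apply the dichotomy. If $G$ is disconnected with components $G_1,\dots,G_k$, each $G_i$ is $P_4$-free with fewer vertices, so $\chi(G_i)=\omega(G_i)$; colouring the components from a common palette gives $\chi(G)=\max_i\chi(G_i)=\max_i\omega(G_i)=\omega(G)$, since every clique lies in one component. If instead $\overline G$ is disconnected, then $G$ is the join of graphs $G_1,\dots,G_k$ with all edges present between distinct parts; by induction $\chi(G_i)=\omega(G_i)$, and because distinct parts are mutually complete, the colour classes must be disjoint across parts while cliques split across parts, giving $\chi(G)=\sum_i\chi(G_i)=\sum_i\omega(G_i)=\omega(G)$.

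The main obstacle is the dichotomy, and within it the case analysis showing that $v$ is complete or anticomplete to each component of $G-v$: this is precisely where the forbidden $P_4$ is forced, and it is the only step that genuinely uses the hypothesis. Everything after the dichotomy is a routine disjoint-union/join induction, and the reduction to $\chi=\omega$ is immediate from heredity.
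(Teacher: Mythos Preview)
Your proof is correct. The paper itself does not prove this lemma: it is stated with a citation to Seinsche~\cite{P4} and used as a black box throughout. Your argument---first the cograph dichotomy (a $P_4$-free graph on at least two vertices is disconnected or has disconnected complement), then a disjoint-union/join induction establishing $\chi=\omega$---is precisely the classical proof and is essentially Seinsche's original argument, so you have supplied exactly what the cited reference contains. The only cosmetic omission is that the base case $n=2$ of the dichotomy is not stated explicitly (your inductive step assumes $n-1\ge 2$); it is of course trivial, since $K_2$ has disconnected complement and $\overline{K_2}$ is disconnected.
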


The next lemma is known as Hall's theorem in bipartite graphs.

\begin{lemma}\cite{HALL}\label{HALL}
Let $G$ be a bipartite graph with bipartition $(X,Y)$. Then $G$ has a matching that saturates $X$ if and only if $|N_G(S)|\geq|S|$ for all $S\subseteq X$.
\end{lemma}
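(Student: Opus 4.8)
The plan is to establish the equivalence in two directions, dispatching necessity in one line and devoting the real work to sufficiency via induction on $|X|$. For necessity, suppose a matching $M$ saturates $X$ and write $M(x)$ for the partner of $x$. Given any $S\subseteq X$, the vertices $\{M(x):x\in S\}$ are $|S|$ distinct elements of $Y$, each adjacent to its own $x\in S$, and hence all lie in $N_G(S)$; thus $|N_G(S)|\geq|S|$. This shows the Hall condition is forced by the existence of a saturating matching.

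For sufficiency I would induct on $|X|$, assuming throughout that $|N_G(S)|\geq|S|$ for every $S\subseteq X$. The base cases $|X|\leq 1$ are immediate, since a single vertex of $X$ has at least one neighbour by the hypothesis. For the inductive step I would split according to whether the Hall inequality is ever \emph{tight} on a nonempty proper subset. In the \emph{slack} case, where $|N_G(S)|\geq|S|+1$ for every nonempty $S\subsetneq X$, I would choose any edge $xy$ (which exists as $N_G(\{x\})\neq\emptyset$) and pass to $G'=G-\{x,y\}$ with $X'=X\setminus\{x\}$. Deleting $y$ lowers each neighbourhood by at most one, so for nonempty $S'\subseteq X'$ we have $|N_{G'}(S')|\geq|N_G(S')|-1\geq|S'|$; the Hall condition survives, induction yields a matching saturating $X'$, and adjoining $xy$ saturates $X$.

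In the \emph{tight} case there is a nonempty $A\subsetneq X$ with $|N_G(A)|=|A|$. Writing $B=N_G(A)$, I would first apply the inductive hypothesis to the bipartite graph on $A\cup B$: every $S\subseteq A$ has $N_G(S)\subseteq B$ with $|N_G(S)|\geq|S|$, so this graph satisfies Hall's condition and, since $|A|<|X|$, admits a matching saturating $A$---necessarily a perfect matching between $A$ and $B$. I would then apply induction a second time to the graph $G_2$ on $(X\setminus A)\cup(Y\setminus B)$ and combine the two matchings on their disjoint vertex sets.

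The one genuinely delicate point---and the step I expect to be the main obstacle---is verifying that $G_2$ inherits the Hall condition. For $S\subseteq X\setminus A$ I would exploit the tight set by examining $A\cup S$: since $N_G(A)=B$ is disjoint from $N_{G_2}(S)=N_G(S)\setminus B\subseteq Y\setminus B$, the neighbourhood decomposes as $N_G(A\cup S)=B\cup N_G(S)$ with $|N_G(A\cup S)|=|A|+|N_{G_2}(S)|$. The hypothesis $|N_G(A\cup S)|\geq|A\cup S|=|A|+|S|$ then forces $|N_{G_2}(S)|\geq|S|$, so induction applies to $G_2$. Once this accounting is in place the rest is bookkeeping, and the union of the two matchings saturates $X$, completing the induction.
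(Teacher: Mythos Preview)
Your proof is correct and is the standard inductive proof of Hall's marriage theorem. The paper itself does not supply a proof of this lemma; it is stated as a classical result with a citation to Hall's original 1935 paper \cite{HALL}, so there is no in-paper argument to compare against.
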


\begin{lemma}\label{L0}
Let $G$ be a graph containing two disjoint cliques  $X$ and $Y$ such that $\min\{|X|,|Y|\}\ge2$ and $\max\{|X|,|Y|\}\ge3$. If $[X,Y]$ induces a nonempty matching, 
then $G[X\cup Y]$ contains a $P_4$.
\end{lemma}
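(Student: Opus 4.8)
The plan is to locate an explicit induced $P_4$ inside $G[X\cup Y]$ by a short case analysis on how the matching $M:=[X,Y]$ meets the two cliques. Without loss of generality I assume $|X|\ge 3$, so that $|Y|\ge 2$, and I fix an edge $x_1y_1\in M$ with $x_1\in X$ and $y_1\in Y$. Call a vertex \emph{matched} if it is incident to an edge of $M$. The one structural fact I will use throughout is that, because $M$ is exactly the (matching) set of all edges between $X$ and $Y$, a matched vertex of $X$ has a \emph{unique} neighbour in $Y$, while an unmatched vertex of $X$ has no neighbour in $Y$ at all (and symmetrically with $X$ and $Y$ interchanged); this is precisely what prevents the candidate paths from acquiring chords.

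First I would handle the case in which $X$ contains an unmatched vertex $x$. Then $x\ne x_1$, and choosing any $y_2\in Y\setminus\{y_1\}$ (possible since $|Y|\ge 2$), the four vertices $x,x_1,y_1,y_2$ induce the path $x-x_1-y_1-y_2$: the edges $xx_1$ and $y_1y_2$ lie inside the cliques $X$ and $Y$, $x_1y_1\in M$, the non-edges $x y_1$ and $x y_2$ hold since $x$ is unmatched, and $x_1 y_2$ is a non-edge since $y_1$ is the only $Y$-neighbour of $x_1$. Applying the same argument with $X$ and $Y$ interchanged (this time invoking $|X|\ge 2$), I may further assume that $Y$ also has no unmatched vertex. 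Hence every vertex of $X\cup Y$ is matched, which forces $|X|=|Y|=:k$, and after relabelling $M=\{x_iy_i:i\in[k]\}$ with $x_i\nsim y_j$ whenever $i\ne j$; moreover $k\ge 3$ because $\max\{|X|,|Y|\}\ge 3$. In this last configuration $x_1-x_2-y_2-y_3$ is an induced $P_4$: its edges are $x_1x_2$ (clique $X$), $x_2y_2\in M$, and $y_2y_3$ (clique $Y$), while the non-edges $x_1y_2$, $x_1y_3$, $x_2y_3$ all hold because in each case the two subscripts differ.

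I do not expect a serious obstacle here: once the matched/unmatched dichotomy is set up, the argument is a three-line verification in each case. The only care needed is bookkeeping — checking in each $4$-tuple that the listed vertices are genuinely distinct (which follows from the matched/unmatched distinction together with $\min\{|X|,|Y|\}\ge 2$ and $\max\{|X|,|Y|\}\ge 3$) and that no clique edge or matching edge slips in as a chord (which follows from the uniqueness of cross-neighbours). It is worth remarking that the hypothesis $\max\{|X|,|Y|\}\ge 3$ is used only in the very last case and cannot be dropped: two size-$2$ cliques joined by a perfect matching can induce a $C_4$, which contains no induced $P_4$.
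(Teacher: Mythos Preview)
Your proof is correct. The paper's argument is in the same spirit---locate four explicit vertices and use the matching hypothesis to rule out chords---but it avoids your case split entirely. Assuming by symmetry that $|Y|\ge 3$, the paper fixes a matching edge $x_1y_1$, picks any $x_2\in X\setminus\{x_1\}$, and notes that since $x_2$ has at most one neighbour in $Y$ there exists $y_3\in Y\setminus\{y_1\}$ with $x_2\nsim y_3$; then $x_2\text{--}x_1\text{--}y_1\text{--}y_3$ is the induced $P_4$, the remaining non-edges $x_2y_1$ and $x_1y_3$ being forced by the matching condition. In other words, the bound $|Y|\ge 3$ already produces a suitable non-neighbour of $x_2$ in $Y\setminus\{y_1\}$ regardless of whether $x_2$ is matched, so your matched/unmatched dichotomy and the separate perfect-matching case are unnecessary. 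Your closing remark that the hypothesis $\max\{|X|,|Y|\}\ge 3$ cannot be dropped (two $K_2$'s joined by a perfect matching yield a $C_4$) is a nice observation not made explicit in the paper.
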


\begin{proof}
By symmetry, assume that $|Y|\ge3$. 
Let $\{x_1, x_2\}\subseteq X$ and $\{y_1, y_2, y_3\}\subseteq Y$. 
Since $[X,Y]\neq\emptyset$ and $[X,Y]$ induces a nonempty matching,
we may assume that $x_1\sim y_1$ and $x_2\nsim y_3$. 
Then $\{x_2, x_1, y_1, y_3\}$ induces a $P_4$.
\end{proof}

\section{The Structure of ($P_2 \cup P_4$, HVN)-Free Graphs}

Let G be a ($P_2 \cup P_4$, HVN)-free graph with $\omega \geq 2$. By Lemma~\ref{LP2P4}, if $\omega=2$, then $\chi(G)\leq 4$; if $\omega =3$, then $\chi (G)\leq 10$.

Hence, for the remainder of this section, we may assume that $\omega \geq 4$. Let $A$ be an induced complete $\omega$-partite subgraph of $G$ of maximum order with parts $A_1, A_2, \ldots, A_\omega$, and let $H=G-A$.

\begin{lemma}\label{LH1}
If $\omega\geq 4$, then for every vertex $v\in V(H)$, $N_A(v)$ intersects at most one part of $A$.
\end{lemma}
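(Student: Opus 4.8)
The plan is to argue by contradiction: suppose some vertex $v\in V(H)$ has $N_A(v)$ meeting (at least) two parts, say $A_1$ and $A_2$. Pick $a_1\in N_A(v)\cap A_1$ and $a_2\in N_A(v)\cap A_2$. The first step is to observe that $v$ cannot be complete to all of $A_1\cup A_2$: if it were, then since $A_1,A_2$ are complete to each other and $\omega\ge4$ we could find a $K_4$ inside $A_1\cup A_2$ (two vertices from $A_1$, two from $A_2$, as $A$ has maximum order so neither $A_i$ need be a singleton — but even if some $A_i$ is a singleton we still get two vertices from $A_1\cup A_2$ plus $v$) together with $v$ and one more vertex from $A_3$, and checking adjacencies this yields an HVN or a $K_5$ minus fewer edges, contradicting HVN-freeness. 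So there is a vertex $a_1'\in A_1$ (or $a_2'\in A_2$) nonadjacent to $v$.

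The second and main step is to exploit the maximality of $A$ together with $(P_2\cup P_4)$-freeness. Since $A$ has maximum order as an induced complete $\omega$-partite subgraph, $v$ cannot be added to any part $A_i$; in particular, for every $i\in[\omega]$, $v$ has a neighbour in $A_i$ — otherwise $A$ with $v$ added to $A_i$ would be a larger such subgraph. Hence $v$ has a neighbour $a_i\in A_i$ for each $i$. Now I would locate a $P_4$ that is anticomplete to a disjoint edge. Using $\omega\ge4$, consider parts $A_3$ and $A_4$: pick $a_3\in N_A(v)\cap A_3$, $a_4\in N_A(v)\cap A_4$. The edge $a_3a_4$ lies in $A$. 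Separately, inside $A_1\cup A_2$ I want a $P_4$ avoiding $a_3,a_4$ and their neighbourhoods sufficiently to produce $P_2\cup P_4$; but $a_3,a_4$ are complete to $A_1\cup A_2$, so that direct approach fails and instead I use $v$: the path on $\{a_1', v, a_2, \dots\}$ — here $a_1'\nsim v$, so I need a genuine $P_4$ through $v$.

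The cleaner route, and the one I expect to carry the argument, is: take $a_1'\in A_1$ with $a_1'\nsim v$ (exists by Step 1, up to swapping $A_1\leftrightarrow A_2$), and $a_2\in N_A(v)\cap A_2$. Then $a_1'\sim a_2$ (different parts), $a_2\sim v$, $a_1'\nsim v$: so $\{a_1', a_2, v\}$ is a $P_3$ with $v$ an endpoint. Extend it: since $A$ is maximal, $v$ has a neighbour $a_3\in A_3$; we have $a_3\sim a_1'$, $a_3\sim a_2$, and $a_3\sim v$, so $a_3$ is complete to the $P_3$ and does not extend it to an induced $P_4$. Instead extend at the other end: we need a vertex adjacent to $a_1'$ but not to $a_2$ or $v$ — take $a_2'\in A_2$ with $a_2'\neq a_2$ (if $|A_2|\ge 2$) — then $a_2'\sim a_1'$, $a_2'\nsim a_2$, and if also $a_2'\nsim v$ we get the induced $P_4$ $\{a_2', a_1', \dots\}$... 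This case analysis on the sizes of the parts is where the real work lies, so the honest statement is:

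\medskip

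\noindent\textbf{The main obstacle.} The crux is handling the interaction between the sizes of the parts $A_i$ and the adjacency pattern of $v$. When several parts are singletons, the usual ``find a $P_4$ in $A_1\cup A_2$ anticomplete to an edge of $A_3\cup A_4$'' trick has to be replaced by an argument routing the $P_4$ through $v$ itself and using the nonneighbour of $v$ guaranteed by the HVN-freeness step, while the disjoint $P_2$ comes from an edge of $A$ in two of the remaining parts (possible since $\omega\ge4$ gives at least two untouched parts whose union still contains an edge). I would organise the proof as: (1) HVN-freeness forbids $v$ complete to two parts, producing a nonneighbour $a_1'$ of $v$; (2) maximality of $A$ forces $v$ to have a neighbour in every part; (3) combine $a_1'$, a neighbour $a_2$ of $v$ in $A_2$, and a suitable vertex in a third or fourth part to build an induced $P_4$ on $\{x, a_1', a_2, v\}$ or similar; (4) pick a disjoint edge from two of the $\omega-2\ge2$ remaining parts, checking it is anticomplete to the $P_4$ — and conclude $P_2\cup P_4$, the desired contradiction. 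Step (3)–(4), verifying anticompleteness across the correct choice of parts, is the delicate part and likely needs a short subcase split on whether $|A_3|,|A_4|\ge 2$.
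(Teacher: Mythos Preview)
Your approach has several genuine gaps, and in fact heads in the wrong direction from the start.

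\textbf{Step 1 is backwards.} You claim HVN-freeness prevents $v$ from being complete to $A_1\cup A_2$, but the opposite is true. Your sketch (``find a $K_4$ inside $A_1\cup A_2$, two vertices from $A_1$, two from $A_2$'') fails immediately: two vertices in the same part are nonadjacent, so that configuration is a $4$-cycle, not a $K_4$. What HVN-freeness actually gives you is this: if $v\sim a_1\in A_1$ and $v\sim a_2\in A_2$, then for any $a_i\in A_i$, $a_j\in A_j$ with $i,j\notin\{1,2\}$ distinct, the set $\{a_1,a_2,a_i,a_j\}$ is a $K_4$ and $v$ cannot be adjacent to exactly two of them, so $v$ hits at least one of $a_i,a_j$. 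Iterating and using the same $K_4$-plus-pendant observation forces $v$ to be complete to at least $\omega-1$ parts of $A$.

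\textbf{Step 2 misuses maximality.} ``$v$ has no neighbour in $A_i$'' does not let you add $v$ to $A_i$: you also need $v$ complete to every $A_j$ with $j\neq i$. So maximality of $A$ does not by itself force $v$ to have a neighbour in every part.

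\textbf{The $P_2\cup P_4$ plan cannot work.} Any edge $a_3a_4$ with $a_3\in A_3$, $a_4\in A_4$ is complete to every vertex of $A_1\cup A_2$ (that is what complete multipartite means), so a $P_2$ taken from two ``remaining'' parts of $A$ is never anticomplete to a $P_4$ containing vertices of $A_1\cup A_2$. Your step (4) therefore fails for structural reasons, not just because of a case split on part sizes.

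The paper's proof is a three-line argument that uses only HVN-freeness and maximality, and never invokes $(P_2\cup P_4)$-freeness at all: HVN-freeness forces $v$ to be complete to $\omega-1$ parts and anticomplete to the remaining one (or complete to all parts), whereupon $\{v\}\cup V(A)$ is a strictly larger complete $\omega$-partite subgraph, contradicting the choice of $A$.
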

\begin{proof}
Suppose, to the contrary, that there exists $v \in V(H)$ such that $x, y \in N_A(v)$ with $x \in A_1$ and $y \in A_2$. Since $G$ is HVN-free  and $\omega \geq 4$, the vertex $v$ must be complete to $\omega - 3$ parts and anticomplete to the remaining part among $A_3, A_4, \ldots, A_\omega$. Hence, by the same argument, $v$ is complete to both $A_1$ and $A_2$. 
Consequently, $\{v\} \cup V(A)$ induces a complete $\omega$-partite subgraph of $G$, contradicting the maximality of $A$.
\end{proof}

By Lemma~\ref{LH1}, we may partition $V(H)$ into the following subsets, where $i \in [\omega]$.
\begin{align*}
    C_i &=\{v\in V(H) \mid N_A(v)\subseteq A_i\text{ \,and\, } N_A(v)\neq \emptyset  \},\\
    C_0 &=\{v\in V(H) \mid N_A(v)=\emptyset\}.
\end{align*}

\begin{lemma}\label{LH2}
For all $1\leq i\neq j\leq \omega$, every subgraph induced by a subset of $V(H)\setminus (C_i\cup C_j)$ is $P_4$-free and hence perfect.
\end{lemma}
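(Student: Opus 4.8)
The plan is to prove the contrapositive shape of the statement: show that $G\bigl[V(H)\setminus(C_i\cup C_j)\bigr]$ contains no induced $P_4$, from which $P_4$-freeness of every subset-induced subgraph, and hence perfection via Lemma~\ref{P4}, follows immediately. The whole argument rests on a single observation: any induced $P_4$ lying inside $V(H)\setminus(C_i\cup C_j)$ can be extended, using two vertices of $A$, to an induced $P_2\cup P_4$.

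So I would suppose for contradiction that $\{a,b,c,d\}\subseteq V(H)\setminus(C_i\cup C_j)$ induces a $P_4$, say with this vertex order along the path. By Lemma~\ref{LH1} and the resulting partition of $V(H)$, each of $a,b,c,d$ belongs either to $C_0$ or to some $C_k$ with $k\in[\omega]\setminus\{i,j\}$; in every case its neighbourhood in $A$ is contained in a part different from $A_i$ and from $A_j$, so it has no neighbour in $A_i\cup A_j$. Now pick $x\in A_i$ and $y\in A_j$ (both parts are nonempty). Then $xy\in E(G)$ because $A$ is complete multipartite and $i\neq j$, while $\{x,y\}$ is anticomplete to $\{a,b,c,d\}$ by the previous sentence. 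Since $x,y\in V(A)$ are disjoint from $\{a,b,c,d\}\subseteq V(H)$, the six vertices $\{x,y,a,b,c,d\}$ induce exactly $P_2\cup P_4$ in $G$, contradicting that $G$ is $(P_2\cup P_4)$-free.

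I do not expect any genuine obstacle here; the only points needing care are purely bookkeeping — that $x$ and $y$ are distinct from the four path vertices (immediate from $V(A)\cap V(H)=\emptyset$) and that $\{x,y\}$ really misses all four neighbourhoods (immediate from the definition of the classes $C_k$ together with Lemma~\ref{LH1}). The hypothesis $\omega\geq 4$ is used only indirectly, to guarantee that Lemma~\ref{LH1} applies and hence that the partition of $V(H)$ into $C_0,C_1,\dots,C_\omega$ is available; note also that HVN-freeness plays no role in this particular lemma, only the exclusion of $P_2\cup P_4$.
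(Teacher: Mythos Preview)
Your argument is correct and is exactly the paper's approach: pick $a_i\in A_i$, $a_j\in A_j$, note that $V(H)\setminus(C_i\cup C_j)$ is anticomplete to $\{a_i,a_j\}$, and conclude that any induced $P_4$ there would give a $P_2\cup P_4$ in $G$. The paper states this in one sentence, and you have spelled out the same reasoning in full.
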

\begin{proof}
This follows from the fact that $G$ is $(P_2\cup P_4)$-free and that
$V(H)\setminus (C_i\cup C_j)$ is anticomplete to $\{a_i,a_j\}$, where $a_i\in A_i$ and $a_j\in A_j$.
\end{proof}

Suppose that $\omega (H)=k$. The next lemma provides an upper bound on the chromatic number of $G$ in terms of $\omega$ and $k$.

\begin{lemma}\label{LH3}
If $G$ is a ($P_2\cup P_4$, HVN)-free graph with $\omega \geq 4$, then $\chi(G)\leq \max\{2k, \omega\}$.
\end{lemma}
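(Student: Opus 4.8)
The plan is to colour $G$ by combining a colouring of $A$ with a colouring of $H=G-A$, reusing colours wherever possible. Since $A$ is a complete $\omega$-partite graph, $\chi(A)=\omega$, so fix an $\omega$-colouring $c_A$ of $A$ in which part $A_i$ receives colour $i$. Since $\omega(H)=k$, I want to $k$-colour $H$ in a way that is "compatible" with $c_A$, i.e.\ so that the combined colouring of $G$ is proper. The only conflicts between $H$ and $A$ come from the sets $C_i$ (vertices of $H$ with $N_A(v)\subseteq A_i$), and a vertex $v\in C_i$ is only forbidden from receiving colour $i$ if $v$ is complete to $A_i$ — but even then we only need to avoid colour $i$ on $v$. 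So the real task is: colour $H$ with $k$ colours, then on the classes $C_i$ recolour/permute to avoid the single bad colour $i$, at the cost of possibly doubling the palette.

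First I would colour $H$. The key structural input is Lemma~\ref{LH2}: for any two indices $i\ne j$, the graph $H-(C_i\cup C_j)$ is perfect, hence $\omega$-colourable with $\omega(H-(C_i\cup C_j))\le k$ colours. I would like to choose $i,j$ so that $C_i\cup C_j$ is itself easy to handle. A natural move: pick two indices $i_0,j_0$ such that $C_{i_0}$ and $C_{j_0}$ are, say, each $P_4$-free (or otherwise low-chromatic) — for instance by averaging, or by noting $C_i$ is anticomplete to $C_j$ for $i\ne j$ in many cases, or by observing that each $C_i$ together with $A_i$ behaves nicely. The cleanest version: $H-(C_i\cup C_j)$ is perfect so uses $\le k$ colours $\{1,\dots,k\}$; separately $C_i$ and $C_j$ each need at most $k$ colours; and crucially $C_i$ is anticomplete to $C_j$ when $i\ne j$ (check: a $P_2\cup P_4$ obstruction — an edge inside $C_i$... actually an edge between a vertex of $C_i$ and one of $C_j$ together with $a_i a_j$-type structure). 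If $[C_i,C_j]=\emptyset$, then $C_i\cup C_j$ is $P_4$-free by Lemma~\ref{LH2} applied with the complementary pair, so it is perfect and $k$-colourable too. Then on $H$ I use palette $\{1,\dots,k\}$ for $H-(C_i\cup C_j)$ and palette $\{k+1,\dots,2k\}$ for $C_i\cup C_j$, giving a proper $2k$-colouring of $H$.

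Now I extend to $G$. Colour $A$ with colours $\{1,\dots,\omega\}$ as above; this already overlaps the palette used on $H$. The point is that a vertex $v\in C_\ell\subseteq V(H)$ conflicts with $A$ only through colour $\ell$, and $C_\ell$ either lies in $H-(C_i\cup C_j)$ (so $\ell\notin\{i,j\}$; recolour just those vertices of $C_\ell$ that carry colour $\ell$ using one fresh colour or a swap within $\{1,\dots,2k\}$, possible since $C_\ell$ is $P_4$-free hence perfect and $\chi(C_\ell)\le k$), or $\ell\in\{i,j\}$ and $C_\ell$ already sits in the high palette $\{k+1,\dots,2k\}$, which is disjoint from $\{1,\dots,\omega\}$ once we note $2k$ colours suffice — we need the total to be $\max\{2k,\omega\}$, so if $\omega\le 2k$ everything fits in $\{1,\dots,2k\}$ after aligning, and if $\omega>2k$ we have so much room in $\{1,\dots,\omega\}$ that the $H$-part can be recoloured inside the $A$-palette avoiding the relevant forbidden colours. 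Either way the bound $\chi(G)\le\max\{2k,\omega\}$ follows.

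The main obstacle I expect is the bookkeeping in the last step: making the two palettes genuinely compatible so that no vertex of $C_\ell$ ends up with colour $\ell$ while keeping the total number of colours at $\max\{2k,\omega\}$ rather than $2k+\omega$ or $3k$. The delicate case is $2k<\omega$ (small $k$, large $\omega$), where we are only allowed $\omega$ colours total; here one must colour $H$ cleverly inside the palette $\{1,\dots,\omega\}$ avoiding, on each $C_\ell$, the colour $\ell$ — this should work because $H-(C_i\cup C_j)$ needs only $k\le\tfrac{\omega}{2}$ colours and $C_i\cup C_j$ another $k$, and there are $\ge\omega-2k$ "spare" colours plus the freedom to permute, but the argument needs to be written carefully, presumably by a direct assignment of disjoint colour blocks to $H-(C_i\cup C_j)$, to $C_i$, and to $C_j$, each block avoiding the at most one forbidden colour for that piece.
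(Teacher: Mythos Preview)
Your plan has a real gap at exactly the point you flag as ``the main obstacle''. You colour $H-(C_i\cup C_j)$ with the palette $\{1,\dots,k\}$ and then propose, for each $\ell\notin\{i,j\}$, to ``recolour just those vertices of $C_\ell$ that carry colour $\ell$''. This does not work: the sets $C_\ell$ for $\ell\notin\{i,j\}$ are not mutually anticomplete (your parenthetical hope that $[C_i,C_j]=\emptyset$ is false in general --- edges between different $C_\ell$'s are analysed at length later in the paper), so you cannot permute or recolour one $C_\ell$ without creating new conflicts with neighbouring $C_m$'s that sit in the same block $H-(C_i\cup C_j)$. The piece $H-(C_i\cup C_j)$ has not one forbidden colour but $\omega-2$ of them, one per remaining $C_\ell$, and you have no mechanism to respect them all simultaneously.

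The paper's move sidesteps this entirely and is much simpler than your outline. Instead of removing two $C_i$'s, split \emph{all} of $H$ along a bisection of $[\omega]$: set $X=C_0\cup C_1\cup\cdots\cup C_{\lfloor\omega/2\rfloor}$ and $Y=C_{\lfloor\omega/2\rfloor+1}\cup\cdots\cup C_\omega$. Since $\omega\ge 4$, each of $X,Y$ omits at least two of the $C_i$'s, so Lemma~\ref{LH2} makes each of $G[X]$ and $G[Y]$ perfect, hence $k$-colourable. Now the compatibility with $A$ is automatic: every vertex of $X$ sees $A$ only in $A_1,\dots,A_{\lfloor\omega/2\rfloor}$, so colouring $X$ from $\{\lfloor\omega/2\rfloor+1,\dots,\lfloor\omega/2\rfloor+k\}$ creates no conflict; symmetrically $Y$ is coloured from $\{1,\dots,\lfloor\omega/2\rfloor\}$ together with (when $k>\lfloor\omega/2\rfloor$) the overflow $\{\lfloor\omega/2\rfloor+k+1,\dots,2k\}$. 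The two palettes for $X$ and $Y$ are disjoint, so edges between $X$ and $Y$ are harmless, and the total is $\max\{2k,\omega\}$ with no recolouring step at all. The idea you were missing is that Lemma~\ref{LH2} lets you make \emph{both} halves of $H$ perfect at once, aligned with a split of the colours on $A$.
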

\begin{proof}
Let $X=C_0\cup C_1\cup \cdots\cup C_{\lfloor\frac{\omega}{2}\rfloor}$ and $Y=C_{\lfloor\frac{\omega}{2}\rfloor+1}\cup C_{\lfloor\frac{\omega}{2}\rfloor+2}\cup \cdots \cup C_\omega$.
By Lemma~\ref{LH2}, both $G[X]$ and $G[Y]$ are perfect. Therefore, $\chi(G[X])\leq \omega(G[X])\leq k$ and $\chi(G[Y])\leq \omega(G[Y])\leq k$. Moreover, $V(G)= V(A)\cup X\cup Y$. 

Now we may colour $a_i$ with $i$, and colour $X$ with colours from $\{\lfloor\frac{\omega}{2}\rfloor+1,\lfloor\frac{\omega}{2}\rfloor+2,\ldots,\lfloor\frac{\omega}{2}\rfloor+k\}$. If $k\leq \lfloor\frac{\omega}{2}\rfloor$, then we colour $Y$ with colours from $\{1,2,\ldots,\lfloor\frac{\omega}{2}\rfloor\}$. 
If  $k> \lfloor\frac{\omega}{2}\rfloor$, then we colour $Y$ with colours from $\{1,2,\ldots,\lfloor\frac{\omega}{2}\rfloor,\lfloor\frac{\omega}{2}\rfloor+k+1,\ldots,2k\}$. In both cases we obtain $\chi(G)\leq \max\{2k, \omega\}$.
\end{proof}

By Lemma~\ref{LH3}, if $k\leq 3$, then
\[
    \max \{2k,\omega\}\leq \max \{6,\omega\}\leq \left\lceil \frac{4}{3} \omega\right\rceil.
\]
Hence, in the remainder of this section, we may further assume that $k\geq 4$.

Let $B$ be an induced complete $k$-partite subgraph of $H$ of maximum order with parts $B_1, B_2, \ldots, B_k$. For the remainder of this section, unless specified otherwise, we adopt the following convention: for $1\le i\le\omega$ and $1\le j\le k$, $a_i$ (possibly with decorations such as $a_i'$ or $a_i^*$) denotes a vertex in $A_i$, and $b_j$ (possibly with decorations such as $b_j'$ or $b_j^*$) 
denotes a vertex in $B_j$. We first prove the following claim.

\begin{claim}\label{CH1}
If there exists a vertex $v\in V(A)$ such that $N_B(v)$ intersects at least two parts of $B$, then $\chi(G)\leq \omega +2$.
\end{claim}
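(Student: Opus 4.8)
The plan is to analyze the vertex $v \in V(A)$ whose $B$-neighbourhood meets at least two parts of $B$, say $v \in A_1$ with $b_1 \in B_1 \cap N_B(v)$ and $b_2 \in B_2 \cap N_B(v)$. First I would exploit the HVN-freeness of $G$ together with $k = \omega(H) \geq 4$ exactly as in Lemma~\ref{LH1}: since $\{b_1, b_2\}$ together with three further parts $B_i, B_j, B_\ell$ would give a $K_5$ inside $N[v]$ if $v$ were complete to all of them, the vertex $v$ must be complete to exactly $k-3$ of the parts $B_3, \ldots, B_k$ and anticomplete to one of them; and by the same token $v$ is in fact complete to both $B_1$ and $B_2$. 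So, relabelling, $v$ is complete to $B_1, \ldots, B_{k-1}$ and anticomplete to $B_k$ (if $v$ had a non-neighbour in some $B_i$, $i \le k-1$, replacing $b_i$ by that vertex would again produce an HVN). This means $\{v\} \cup (B_1 \cup \cdots \cup B_{k-1})$ induces a complete $k$-partite graph, so $|B_k| \geq |B_1|$ by maximality of $B$, hence (reasoning symmetrically over all parts) we may as well assume all parts of $B$ have size... actually what I really want is just: $\{v\}$ behaves like an extra vertex glued onto $B$ minus one part.

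**The key structural step.** Now I would bring $A$ back into the picture. The vertex $v \in A_1$ is anticomplete to $A_1 \setminus \{v\}$ and complete to $A_2, \ldots, A_\omega$. Pick $b_k \in B_k$; since $b_k \in V(H) = V(G) \setminus V(A)$, the vertex $b_k$ lies in some $C_i$ or $C_0$, so $N_A(b_k)$ meets at most one part $A_i$. I claim $b_k$ must be anticomplete to most of $A$: if $b_k$ had a neighbour $a_2 \in A_2$ with $a_2 \neq$ (some fixed neighbour), then since $v \sim a_2$, $v \sim b_1, b_2$ and $v \nsim b_k$, while $b_1, b_2, b_k$ lie in distinct parts of $B$ hence... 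I need to locate a $P_2 \cup P_4$. The cleanest route: the three vertices $b_1, b_2, b_k$ of $B$ are pairwise adjacent (they lie in different parts of $B$), $v$ is adjacent to $b_1, b_2$ but not $b_k$, and $v$ is adjacent to every vertex of $A_2 \cup \cdots \cup A_\omega$. An edge inside $A$ disjoint from all this machinery — say $a_i a_j$ with $i \ne j$, $i, j \ge 3$ — is available since $\omega \ge 4$; if that edge can be made anticomplete to a suitable $P_4$ living in $G[\{v, b_1, b_2, b_k\} \cup \text{(neighbours in }A)]$, we contradict $(P_2 \cup P_4)$-freeness. I would push exactly this: show that $b_k$ (and more generally every vertex of $B_k$) is anticomplete to $A_3 \cup \cdots \cup A_\omega$, and similarly constrain where $B_1, \ldots, B_{k-1}$ can attach to $A$.

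**Assembling the colouring.** Once the attachment of $B$ to $A$ is pinned down — I expect the conclusion to be that $V(A) \cup V(B) \cup (\text{controlled leftover})$ splits into a bounded number of cliques and perfect pieces — I would build an explicit colouring using at most $\omega + 2$ colours. The model is Lemma~\ref{LH3}'s proof: colour $A_i$ with colour $i$, then use Lemma~\ref{LH2} to see that the relevant subsets of $V(H)$ induce $P_4$-free (hence perfect) graphs colourable with $\omega(H) = k$ colours, and finally show that the forced structure lets these $k$-colourings be packed into the $\omega$ colours already used plus just two extra, rather than $k$ extra. The savings over Lemma~\ref{LH3} come precisely from the fact that $v$'s existence forces $B$ (and the $C_i$'s meeting it) to interact with $A$ in a very limited way, so the "two halves" $X, Y$ of Lemma~\ref{LH3} can be recoloured more efficiently. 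I would also need to handle $C_0$ and any vertices of $H$ not in $B$ via the maximality of $B$ and Lemma~\ref{LH2}.

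**Main obstacle.** The hard part will be the middle step: extracting a $P_2 \cup P_4$ from the interaction between $v$, the triangle $\{b_1, b_2, b_k\} \subseteq B$, and the edges of $A$, while carefully tracking which vertices of $A$ are neighbours of which $b_j$'s. There are several cases depending on whether $b_k$'s unique possible $A$-part is $A_1$, $A_2$, or one of $A_3, \ldots, A_\omega$, and on how the parts $B_1, \ldots, B_{k-1}$ attach to $A$; each case needs its own $P_2 \cup P_4$ witness or else collapses into a statement that lets us absorb those vertices cheaply. Getting the bound down to exactly $\omega + 2$ (rather than $\omega + 3$ or worse) will require the structural constraints to be tight, so the bookkeeping in this step is where the real work lies.
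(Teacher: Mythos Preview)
Your opening move is correct and matches the paper: HVN-freeness together with $k\ge 4$ forces $v=a_1$ to be complete to $k-1$ parts of $B$, say $B_1,\ldots,B_{k-1}$. After that, however, your plan diverges from the paper's and has a real gap.

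You propose to pin down the attachment $[V(B),V(A)]$ (where can $b_k$ meet $A$, how do $B_1,\ldots,B_{k-1}$ attach, etc.) and then hope this lets you repack the colouring of $H$ into $\omega+2$ colours. But $H$ is much larger than $B$: the sets $C_0,C_1,\ldots,C_\omega$ partition \emph{all} of $V(H)$, and vertices of $H\setminus V(B)$ are constrained only by Lemma~\ref{LH1}, not by anything you learn about $B$. Your fallback, ``handle $C_0$ and the vertices of $H$ not in $B$ via maximality of $B$ and Lemma~\ref{LH2}'', does not close this: Lemma~\ref{LH2} only tells you $V(H)\setminus(C_i\cup C_j)$ is perfect (hence $\le\omega$-colourable), leaving $C_i\cup C_j$ uncontrolled, so you get at best $\omega+\chi(G[C_i\cup C_j])$, not $\omega+2$. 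Nothing in your outline bounds $\chi(G[C_i\cup C_j])$ by~$2$.

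The paper's route is to ignore $[V(B),V(A)]$ and instead use the clique $D=\{b_1,\ldots,b_{k-1}\}$ (to which $a_1$ is complete) as a probe into $C_2\cup\cdots\cup C_\omega$. For every $x$ there, HVN-freeness (via $x\nsim a_1$) forces $|N_D(x)|\in\{0,1,k-1\}$; a case analysis on these three types then manufactures $P_2\cup P_4$'s to prove that, up to relabelling, the union $C_3\cup\cdots\cup C_\omega$ is a \emph{stable set}. That single structural fact yields $\omega+2$ immediately: colour $A_i$ with $i$, give the stable set colour~$1$, and colour the perfect graph $G[C_0\cup C_1\cup C_2]$ (Lemma~\ref{LH2}) with colours from $\{3,\ldots,\omega+2\}$. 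Your proposal never isolates a large stable piece of $H$, and without one there is no visible mechanism to descend from the $\max\{2k,\omega\}$ of Lemma~\ref{LH3} down to $\omega+2$.
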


\begin{proof}
By symmetry, we may let $a_1\in A_1$ such that $N_B(a_1)$ intersects at least two parts of $B$. 
Since $G$ is HVN-free and $k\geq 4$, similar to the proof of Lemma~\ref{LH1}, we can prove that $a_1$ is complete to $k-1$ parts of $B$. Without loss of generality, assume that $a_1$ is complete to $B_1,B_2,\ldots,B_{k-1}$. Now $\{a_1\}\cup B_1\cup B_2\cup \cdots\cup B_{k-1}$ induces a complete $k$-partite graph. For $i\in[k]$, let $b_i\in B_i$, and denote by $D$ the clique $\{b_1,b_2,\ldots,b_{k-1}\}$.

We first observe that for every vertex $x\in C_2\cup C_3\cup\cdots\cup C_\omega$, either $x$ is complete to $D$, or $x$ is anticomplete to $D$, or $|N_D(x)|=1$. Otherwise, suppose $\{b_i,b_j\}\subseteq N_D(x)$ and $b_m\notin N_D(x)$, where $i, j, m\in [k-1]$. Since $x\nsim a_1$, $\{a_1,b_i,b_j,b_m,x\}$ induces an HVN, a contradiction.

Moreover, we assert that if there exist vertices $x, y \in C_2\cup C_3\cup\cdots\cup C_\omega$ such that $x$ is complete to $D$ and either $y$ is complete to $D$ or $|N_D(y)|=1$, then $x\nsim y$. 
Otherwise, suppose that $x\sim y$. If both $x$ and $y$ are complete to $D$, then $\{x,y\}\cup D$ would be a clique of order $k+1$, contradicting $\omega(H)=k$. If $x$ is complete to $D$ and $|N_D(y)|=1$, 
then, without loss of generality, assume $N_D(y)=\{b_1\}$. In this case, $\{x,b_1,b_2,b_3,y\}$ would induce an HVN, a contradiction. Next, we show that 
\begin{equation}\label{E1}
    \text{there exists a union of $\omega-2$ sets among $C_2,C_3,\ldots,C_\omega$ that forms a stable set,}
\end{equation}
by considering the following cases.

\medskip

\noindent\textbf{Case 1.} There exists a vertex in $C_2\cup C_3\cup\cdots\cup C_\omega$ that is anticomplete to $D$.

By symmetry, assume that $x\in C_2$ is anticomplete to $D$, and let $a_2\in N_A(x)$. Let $y\in C_3$, and let $a_3\in N_A(y)$. We assert that $y$ is complete to $D$. Otherwise, $|N_D(y)|\le 1$, and since 
$|D|=k-1\ge 3$, there exist two vertices $z,z'\in D$ such that 
$y$ is anticomplete to $\{z,z'\}$. If $x\nsim y$, then $\{z,z^\prime,x,a_2,a_3,y\}$ induces a $P_2\cup P_4$; if $x\sim y$, then $\{z,z^\prime,a_4,a_2,x,y\}$ induces a $P_2\cup P_4$. In both cases, we obtain a contradiction. Therefore, $C_3$ is complete to $D$. 
Hence, if $x\in C_j$ ($2\le j\le k-1$) is anticomplete to $D$, then 
$(C_2\cup C_3\cup\cdots\cup C_\omega)\setminus C_j$ is complete to $D$. 
Thus, $(C_2\cup C_3\cup\cdots\cup C_\omega)\setminus C_j$ is stable, and hence 
\eqref{E1} holds.

\smallskip

\noindent\textbf{Case 2.} 
Every vertex in $C_2\cup C_3\cup\cdots\cup C_\omega$ is not anticomplete to $D$.

In this case, for every vertex $x\in C_2\cup C_3\cup\cdots\cup C_\omega$, 
either $x$ is complete to $D$ or $|N_D(x)|=~1$.

Moreover, by the previous argument, if $x$ is complete to $D$, 
then $x$ is anticomplete to all other vertices in $C_2\cup C_3\cup\cdots\cup C_\omega$. 
Therefore, we only need to consider the adjacency between vertices 
in $C_2\cup C_3\cup\cdots\cup C_\omega$ that have exactly one neighbour in $D$. 
Since $G$ is HVN-free, every such vertex $x$ is nonadjacent to $b_k$. 
We now consider the following two subcases according to the value of $\omega$.

\smallskip

\noindent\textbf{Case 2.1.} $\omega= 4$.

In this case, $a_1\nsim b_4$; otherwise, $\{a_1,b_1,b_2,b_3,b_4\}$ would induce a $K_5$, a contradiction.

We first show that each of $C_2$, $C_3$, and $C_4$ is stable. 
Let $x,y\in C_2$ with $|N_D(x)|=|N_D(y)|=1$. 
Since $|D|=k-1\ge3$, there exists a vertex $z\in D$ that is anticomplete to $\{x,y\}$. 
If $x\sim y$, then $\{x,y,a_4,a_1,z,b_4\}$ would induce a $P_2\cup P_4$, a contradiction. 
Thus $x\nsim y$, and hence $C_2$ is stable. 
By symmetry, each of $C_2$, $C_3$, and $C_4$ is stable.

If $b_4\in C_0\cup C_1$, then $C_2\cup C_3\cup C_4$ is stable. 
Otherwise, since each of $C_2$, $C_3$, and $C_4$ is stable, we may assume that there exist $x\in C_2$ and $y\in C_3$ such that $|N_D(x)|=|N_D(y)|=1$ and $x\sim y$. 
Let $a_2\in N_A(x)$ and $a_3\in N_A(y)$. 
Since $|D|=k-1\ge3$, there exists a vertex $z\in D$ that is anticomplete to $\{x,y\}$. 
Then $\{x,y,a_4,a_1,z,b_4\}$  induces a $P_2\cup P_4$, a contradiction. 
Hence $C_2$ is anticomplete to $C_3$. 
By symmetry, any two among $C_2$, $C_3$, and $C_4$ are anticomplete to each other, and hence $C_2\cup C_3\cup C_4$ is stable.

If $b_4\in C_2$, then by a similar argument we can show that both $C_2\cup C_3$ and 
$C_2\cup C_4$ are stable. Therefore, by symmetry, if $b_4\in C_i$ for some $2\le i\le 4$, 
then $C_i\cup C_j$ is stable for every $j\in\{2,3,4\}\setminus\{i\}$. 
Thus, there exists a union of two sets among $C_2$, $C_3$, and $C_4$ that forms a stable set, and hence \eqref{E1} holds.

\smallskip

\noindent\textbf{Case 2.2.} $\omega\geq 5$.

In this case, we will show that at most one of $C_2, C_3,\ldots, C_\omega$ contains vertices having exactly one neighbour in $D$, implying that \eqref{E1} holds. Suppose otherwise. By symmetry, let $x \in C_2$ with $N_D(x) = \{z\}$ and $a_2 \in N_A(x)$, and let $y \in C_3$ with $N_D(y) = \{z'\}$ and $a_3 \in N_A(y)$. 

If $z=z^\prime$, then there exist two vertices $u,v\in D$ anticomplete to $x$ and $y$ since $|D|=k-1\geq 3$. If $x\sim y$, then $\{u,v,a_4,a_2,x,y\}$ induces a $P_2\cup P_4$; if $x\nsim y$, then $\{u,v,x,a_2,a_3,y\}$ induces a $P_2\cup P_4$. Both cases lead to a contradiction. If $z\neq z^\prime$, then there exists a vertex $w\in D$ that is anticomplete to $x$ and $y$. If $x\sim y$, then $\{a_4,a_5,w,z,x,y\}$ induces a $P_2\cup P_4$; if $x\nsim y$, then $\{a_4,a_5,x,z,z^\prime,y\}$ induces a $P_2\cup P_4$. Both cases yield a contradiction.

\medskip

Hence,  there exists a union of $\omega - 2$ sets among $C_2, C_3, \ldots, C_\omega$ that forms a stable set. By symmetry, we suppose that $C_3\cup \cdots\cup C_\omega$ is stable. Then we partition $V(G)$ into three sets: $V(A)$, $C_0\cup C_1\cup C_2$, and $C_3\cup\cdots\cup C_\omega$. By Lemma~\ref{LH2}, $G[C_0\cup C_1\cup C_2]$ is perfect, and hence can be coloured with at most $\omega$ colours.

Now we may colour $A_i$ with colour $i$, and colour $C_0\cup C_1\cup C_2$ with at most $\omega$ colours from $[w+2]\setminus\{1,2\}$, and colour $C_3\cup\cdots\cup C_\omega$ with colour $1$. Hence, $\chi(G)\leq \omega +2$. This completes the proof of Claim~\ref{CH1}.
\end{proof}

Since for $\omega\geq 4$ we have $w+2\leq \lceil\frac{4}{3}\omega\rceil $.     By Claim~\ref{CH1}, we may assume that for every vertex $v\in V(A)$, $N_B(v)$ intersects at most one part of $B$. Thus, 
\begin{equation}\label{E2}
    \text{$[\{a_1,a_2,\ldots,a_\omega\},\{b_1,b_2,\ldots,b_k\}]$ is a matching in $G$.}
    \tag{$\ast$}
\end{equation}

Since $H$ is HVN-free, similarly to Lemma~\ref{LH1}, we have the following lemma.

\begin{lemma}\label{LH4}
If $k\geq 4$, then for every vertex $v\in V(H)\setminus V(B)$, $N_B(v)$ intersects at most one part of $B$.
\end{lemma}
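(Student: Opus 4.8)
The plan is to transplant the proof of Lemma~\ref{LH1} to $H$. Indeed, $H$ is HVN-free because it is an induced subgraph of the HVN-free graph $G$; we have $k\ge 4$ by the standing assumption; $B$ is an induced complete $k$-partite subgraph of $H$ of maximum order; and $\omega(H)=k$. These are precisely the analogues, with $(H,B,k)$ in place of $(G,A,\omega)$, of the hypotheses driving Lemma~\ref{LH1}, so the same line of reasoning applies.

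So I would suppose, for a contradiction, that some $v\in V(H)\setminus V(B)$ has neighbours $b_1\in B_1$ and $b_2\in B_2$. The workhorse is HVN-freeness applied to small configurations inside $V(B)\cup\{v\}$. First, for each $j\ge 3$, if $v$ were adjacent to some $b_j\in B_j$ but not to some $b_j'\in B_j$, then $\{v,b_1,b_2,b_j,b_j'\}$ would induce an HVN (the two non-edges $vb_j'$ and $b_jb_j'$ share the vertex $b_j'$); hence $v$ is complete or anticomplete to $B_j$. Second, for distinct $i,j\ge 3$, if $v$ were anticomplete to both $B_i$ and $B_j$ then, taking $b_i\in B_i$ and $b_j\in B_j$, the set $\{v,b_1,b_2,b_i,b_j\}$ would induce an HVN (the two non-edges $vb_i$, $vb_j$ share $v$); since $k\ge 4$ guarantees at least two parts among $B_3,\dots,B_k$, it follows that $v$ is complete to all but at most one of them. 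Third, fix a part $B_\ell$ with $\ell\ge 3$ to which $v$ is complete (such $\ell$ exists, as at most one of the $\ge 2$ parts $B_3,\dots,B_k$ is excluded); then for $b_1,b_1'\in B_1$ with $v\sim b_1$, $v\nsim b_1'$, the set $\{v,b_2,b_\ell,b_1,b_1'\}$ induces an HVN, so $v$ is in fact complete to $B_1$, and symmetrically to $B_2$.

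Combining the three observations, $v$ is complete to at least $k-1$ of the parts $B_1,\dots,B_k$, and if there is a remaining part it is anticomplete to it. If $v$ is complete to all $k$ parts, then $v$ together with one vertex from each part is a $K_{k+1}$ in $H$, contradicting $\omega(H)=k$. If $v$ is complete to $k-1$ parts and anticomplete to the part $B_m$, then $V(B)\cup\{v\}$, with $v$ placed in $B_m$, is an induced complete $k$-partite subgraph of $H$ of larger order, contradicting the maximality of $B$. Either way we obtain a contradiction, which proves the lemma. The argument is routine; the only point meriting care is tracking how many parts are available when $k=4$, which is exactly the role of the hypothesis $k\ge 4$.
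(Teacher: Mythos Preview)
Your proof is correct and follows exactly the approach the paper intends: the paper does not give a separate proof of Lemma~\ref{LH4} but simply notes that, since $H$ is HVN-free, the argument for Lemma~\ref{LH1} applies verbatim with $(H,B,k)$ in place of $(G,A,\omega)$. You have faithfully (and in fact more carefully than the paper's terse proof of Lemma~\ref{LH1}) carried out that transplantation.
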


By Lemma~\ref{LH1} and Lemma~\ref{LH4}, we partition $V(H)\setminus V(B)$ into the following subsets, where $i\in [\omega]$ and $j\in [k]$.
\begin{align*}
    R_j &=\{v\in V(H)\setminus V(B) \mid N_A(v)=\emptyset, N_B(v)\neq\emptyset, N_B(v)\subseteq B_j \},\\
    S_i^j &=\{v\in V(H)\setminus V(B) \mid N_A(v)\neq\emptyset, N_B(v)\neq\emptyset, N_A(v)\subseteq A_i, N_B(v)\subseteq B_j\},\\  
    T_i &=\{v\in V(H)\setminus V(B) \mid N_A(v)\neq \emptyset, N_B(v)=\emptyset, N_A(v)\subseteq A_i  \},\\
    Z &=\{v\in V(H)\setminus V(B) \mid N_A(v)=\emptyset,  N_B(v)=\emptyset \}.
\end{align*}
By Lemma~\ref{LH2}, each of the sets $R_j$, $S_i^j$, $T_i$, and $Z$ induces a perfect graph. We then define
\[
R = \bigcup_{j=1}^{k} R_j,\quad 
S = \bigcup_{j=1}^k\bigcup_{i=1}^{\omega} S_i^j,\quad 
T = \bigcup_{i=1}^{\omega} T_i.
\]
Note that 
\[
V(G)=V(A)\cup V(B)\cup R\cup S\cup T\cup Z.
\]
If we define
\begin{align*}
    C_j^\prime &=\{v\in V(H)\setminus V(B) \mid N_B(v)\neq \emptyset \text{ \,and\, }N_B(v)\subseteq B_j   \},\\
    C_0^\prime &=\{v\in V(H)\setminus V(B) \mid N_B(v)=\emptyset\},
\end{align*}
then Table 1 illustrates the relations among the sets in the partition introduced earlier.

\begin{table}[h]
\centering
\scalebox{0.9} 
{
\renewcommand{\arraystretch}{2} 
\setlength{\tabcolsep}{15pt} 
\begin{tabular}{c|cccccc}
\rule{0pt}{3ex} 
$\cap$ & $C_0'$ & $C_1'$ & $C_2'$ & $C_3'$ & $\cdots$ & $C_k'$ \\
\hline
\rule{0pt}{3ex}
$C_0$ & $Z$ & $R_1$ & $R_2$ & $R_3$ & $\cdots$ & $R_k$ \\
$C_1$ & $T_1$ & $S_1^1$ & $S_1^2$ & $S_1^3$ & $\cdots$ & $S_1^k$ \\
$C_2$ & $T_2$ & $S_2^1$ & $\ddots$ & & & $\vdots$ \\
$C_3$ & $T_3$ & $S_3^1$ & & $\ddots$ & & $\vdots$ \\
$\vdots$ & $\vdots$ & $\vdots$ & & & $\ddots$ & $\vdots$ \\
$C_\omega$ & $T_\omega$ & $S_\omega^1$ & $\cdots$ & $\cdots$ & $\cdots$ & $S_\omega^k$ \\
\end{tabular}
}
\caption{Partitions of $V(H)\backslash V(B)$}
\label{tab: Partition}
\end{table}

\begin{claim}\label{C2}
Let $x,y$ be two vertices in $R\cup S\cup T\cup Z$.
If $N_{A\cup B}(x)\neq N_{A\cup B}(y)$, and the set $N_{A\cup B}(x)\cup N_{A\cup B}(y)$ intersects at most three parts among $A_1,\ldots,A_\omega,B_1,\ldots,B_k$, then $x\nsim y$.
\end{claim}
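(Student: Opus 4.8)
The plan is to prove the contrapositive in spirit: assume $x\sim y$ and derive that the combined neighbourhood must intersect at least four parts, contradicting the hypothesis. First I would record the basic structural facts available to us: by Lemma~\ref{LH1} and Lemma~\ref{LH4}, each of $N_A(x),N_A(y)$ lies in a single part $A_i$ (or is empty) and each of $N_B(x),N_B(y)$ lies in a single part $B_j$ (or is empty); moreover by~\eqref{E2} the edges between $\{a_1,\dots,a_\omega\}$ and $\{b_1,\dots,b_k\}$ form a matching, and $A$ (resp.\ $B$) is a complete $\omega$-partite (resp.\ $k$-partite) subgraph with $\omega\ge 4$, $k\ge 4$. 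Since $N_{A\cup B}(x)\neq N_{A\cup B}(y)$, there is a vertex, say $w\in N_{A\cup B}(x)\setminus N_{A\cup B}(y)$ (or with $x,y$ swapped); I would fix such a $w$ together with the part it belongs to, and argue by cases according to whether this part is some $A_i$ or some $B_j$ and which other parts the two neighbourhoods hit.

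The core of the argument is an HVN-avoidance count. If $x\sim y$ and the union $N_{A\cup B}(x)\cup N_{A\cup B}(y)$ meets only the parts $P,Q,R$ (three parts, each of $A$-type or $B$-type), then I want to exhibit three mutually adjacent vertices $p\in P$, $q\in Q$, $r\in R$ — this is where the completeness of the multipartite graphs $A$ and $B$ enters, though one has to be careful when two of $P,Q,R$ lie on opposite sides $A$ and $B$, since then adjacency between the chosen representatives is governed by the matching~\eqref{E2} and need not hold. Assuming we can pick such a triangle $\{p,q,r\}$ with $\{p,q,r\}\subseteq N_{A\cup B}(x)\cup N_{A\cup B}(y)$, the five vertices $\{x,y,p,q,r\}$ should be shown to induce an HVN: $x\sim y$, the triangle $pqr$ is present, and $x,y$ each see a suitable subset of $\{p,q,r\}$ so that exactly two of the ten possible edges on five vertices are missing and they share the common endpoint $x$ or $y$. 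The asymmetry $N_{A\cup B}(x)\neq N_{A\cup B}(y)$ is exactly what guarantees the two missing edges are incident to a common vertex rather than forming a matching (which would give $K_5$ minus a matching, i.e.\ not HVN) — so this hypothesis is used in an essential way.

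The main obstacle, as flagged above, is the bookkeeping when $P,Q,R$ are split between the $A$-side and the $B$-side: a vertex of $A_i$ and a vertex of $B_j$ are adjacent only if they happen to be the matched pair, so the naive triangle may fail to exist. I expect the resolution is that in exactly those configurations the neighbourhood union is forced to meet a fourth part anyway — for instance, if $N_{A\cup B}(x)$ contains an $A$-vertex and a non-matched $B$-vertex while $N_{A\cup B}(y)$ differs, tracing through which $S_i^j$, $T_i$, $R_j$ classes $x$ and $y$ can belong to (using Table~\ref{tab: Partition}) pins down the possibilities, and in each surviving case one either finds the needed triangle among same-side parts or finds a $P_2\cup P_4$ (using spare vertices $a_\ell\in A_\ell$, $b_m\in B_m$ from untouched parts, which exist since $\omega,k\ge 4$) instead of an HVN. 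So the proof naturally splits into: (i) the ``same side'' case, where all of $P,Q,R$ lie in $A$ or all in $B$, handled cleanly by completeness plus the HVN count; and (ii) the ``mixed'' case, handled by a short enumeration of the partition classes of $x$ and $y$ together with a $P_2\cup P_4$ or HVN witness. I would present (i) first as the model argument and then dispatch (ii) by symmetry-reduced case analysis.
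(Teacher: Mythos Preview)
Your HVN plan does not work, and the fallback $P_2\cup P_4$ idea you mention only in passing is in fact the entire proof.

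Concretely: suppose the union meets three parts. Since each of $N_{A\cup B}(x)$ and $N_{A\cup B}(y)$ meets at most one $A$-part and at most one $B$-part, the three parts are necessarily split across the two sides, say $A_1,A_2,B_1$. For your triangle $\{p,q,r\}$ with $p\in A_1$, $q\in A_2$, $r\in B_1$ you need $p\sim r$ and $q\sim r$; but by~\eqref{E2} the vertex $r$ has at most one neighbour among $\{p,q\}$, so the triangle typically fails to exist. Worse, even if you could arrange a triangle, the edge count from $\{x,y\}$ to $\{p,q,r\}$ is wrong: each of $x,y$ sees at most one $A$-vertex and at most one $B$-vertex among $p,q,r$, so at most four of the six cross-edges are present; and in the generic configuration (e.g.\ $x\in S_1^1$, $y\in S_2^1$) the two missing edges are $xq$ and $yp$, which are independent. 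That yields $K_5$ minus a matching, not an HVN. Your ``same side'' case (i), which you call the model argument, is actually vacuous for three parts and unhelpful for two parts, where no triangle is available at all. The hypothesis $N_{A\cup B}(x)\neq N_{A\cup B}(y)$ is not what forces the missing edges to share a vertex; it is used for a different purpose.

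The paper's proof never produces an HVN. It works entirely on the $P_2\cup P_4$ side, using vertices from parts \emph{not} touched by $N_{A\cup B}(x)\cup N_{A\cup B}(y)$. The key step you are missing is this: once you know which (at most three) parts are touched, the remaining $a_i$'s and $b_j$'s are anticomplete to $\{x,y\}$, and by~\eqref{E2} together with Lemma~\ref{L0} the leftover $a_i$'s are in fact anticomplete to the leftover $b_j$'s. This gives a large supply of pairwise nonadjacent $A$--$B$ pairs, from which one assembles a $P_2$ disjoint from a $P_4$ of the form $a_\ell\,a_i\,x\,y$ (here $a_i$ is a neighbour of $x$ guaranteed by $N_{A\cup B}(x)\neq N_{A\cup B}(y)$). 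The paper splits into just two cases according to whether the union hits two $A$-parts (or two $B$-parts) versus at most one on each side; both are dispatched by this $P_2\cup P_4$ construction.
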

\begin{proof}
Suppose, to the contrary, that there exist vertices $x,y\in R\cup S\cup T\cup Z$ satisfying the above conditions such that $x\sim y$. By Lemmas~\ref{LH1} and~\ref{LH4}, we may distinguish between the following two cases.

\medskip

\noindent\textbf{Case 1.} $N_{A\cup B}(x)\cup N_{A\cup B}(y)$ intersects two parts among $A_1,\ldots,A_\omega$ or two parts among $B_1,\ldots,B_k$.

Suppose $N_{A\cup B}(x)\cup N_{A\cup B}(y)$ intersects two parts among $A_1,\ldots,A_\omega$ (the case where it intersects two parts among $B_1,\ldots,B_k$ is analogous). Since $N_{A\cup B}(x)\cup N_{A\cup B}(y)$ intersects at most three parts among $A_1,\ldots,A_\omega,B_1,\ldots,B_k$, it intersects at most one part among $B_1,\ldots,B_k$. By symmetry, we may assume that $x\in C_1\cap (C_0^\prime \cup C_1^\prime)$ and $y\in C_2\cap (C_0^\prime \cup C_1^\prime)$. Hence, $\{x,y\}$ is anticomplete to $A_3\cup \cdots\cup A_\omega\cup B_2\cup \cdots\cup B_k$. By \eqref{E2}, $[\{a_3,\ldots,a_\omega\},\{b_2,\ldots,b_k\}]$ is a matching. Since $\omega,k\geq 4$ and $G$ is $(P_2\cup P_4)$-free, Lemma~\ref{L0} implies that $\{a_3,\ldots,a_\omega\}$ is anticomplete to $\{b_2,\ldots,b_k\}$. Let $a_1\in N_A(x)$. Since $k\geq 4$, there exist vertices $b_i,b_j\in \{b_2,\ldots,b_k\}$ that are anticomplete to $a_1$. Consequently, $\{b_i,b_j,a_3,a_1,x,y\}$ induces a $P_2\cup P_4$, a contradiction.

\smallskip

\noindent\textbf{Case 2.} $N_{A\cup B}(x)\cup N_{A\cup B}(y)$ intersects at most one part among $A_1,\ldots,A_\omega$ and at most one part among $B_1,\ldots,B_k$.

By symmetry, we may assume that $x,y\in (C_0\cup C_1)\cap (C_0^\prime \cup C_1^\prime)$. Thus, $\{x,y\}$ is anticomplete to $A_2\cup \cdots\cup A_\omega\cup B_2\cup \cdots\cup B_k$. By \eqref{E2}, $[\{a_2,\ldots,a_\omega\},\{b_2,\ldots,b_k\}]$ is a matching. Since $\omega,k\geq 4$ and $G$ is $(P_2\cup P_4)$-free, Lemma~\ref{L0} implies that $\{a_2,\ldots,a_\omega\}$ is anticomplete to $\{b_2,\ldots,b_k\}$. Since $N_{A\cup B}(x)\neq N_{A\cup B}(y)$, it follows that  $N_{A}(x)\neq N_{A}(y)$ or $N_{B}(x)\neq N_{B}(y)$. Suppose $N_{A}(x)\neq N_{A}(y)$ (the case where $N_{B}(x)\neq N_{B}(y)$ is analogous). Then there exists a vertex $a_1\in A_1$ such that either $a_1\sim x$ and $a_1\nsim y$, or $a_1\nsim x$ and $a_1\sim y$. By symmetry, assume that $a_1\sim x$ and $a_1\nsim y$. Since $k\geq 4$, there exist vertices $b_i,b_j\in \{b_2,\ldots,b_k\}$ that are anticomplete to $a_1$. Consequently, $\{b_i,b_j,a_2,a_1,x,y\}$ induces a $P_2\cup P_4$, a contradiction.

\medskip

This completes the proof of Claim~\ref{C2}
\end{proof}

We now conclude Section $3$ by presenting several useful properties of the subsets in the partition defined earlier. In particular, Properties (P1)–(P4) follow directly from Claim \ref{C2}.

\smallskip

\noindent(P1) $Z$ is anticomplete to $R\cup S\cup T$.

\medskip

\noindent(P2) $R$ is anticomplete to $S\cup T\cup Z$, and $R_i$ is anticomplete to $R_j$ for distinct $i,j \in [k]$.

\medskip

\noindent(P3) $T$ is anticomplete to $R\cup S\cup Z$, and $T_i$ is anticomplete to $T_j$ for distinct $i,j \in [\omega]$.

\medskip

\noindent(P4) $S$ is anticomplete to $R\cup T\cup Z$. 
Moreover, $S_i^p$ is anticomplete to $S_i^q$ for every $i\in[\omega]$ and distinct $p,q\in[k]$; and $S_i^p$ is anticomplete to $S_j^p$ for every $p\in[k]$ and distinct $i,j\in[\omega]$.

\medskip

\noindent(P5) If $S_i^p$ is not stable, then $(A_1\cup\cdots\cup A_\omega)\setminus A_i$ is anticomplete to $(B_1\cup\cdots\cup B_k)\setminus B_p$, and $S_i^p$ is anticomplete to $S_j^q$ for distinct $p,q \in [k]$ and distinct $i,j \in [\omega]$.
\begin{proof}
By symmetry, it suffices to show that (P5) holds for $i=p=1$ and $j=q=2$. Then we may let $x,y\in S_1^1$ and $x\sim y$. By \eqref{E2}, $[\{a_2,\ldots,a_\omega\},\{b_2,\ldots,b_k\}]$ is a matching. Since $\omega,k\geq 4$ and $G$ is $(P_2\cup P_4)$-free, Lemma~\ref{L0} implies that $\{a_2,\ldots,a_\omega\}$ is anticomplete to $\{b_2,\ldots,b_k\}$. Hence, $A_2\cup \cdots\cup A_\omega$ is anticomplete to $B_2\cup \cdots\cup B_k$. 

Next, we show that $S_1^1$ is anticomplete to $S_2^2$. To the contrary, suppose that there exist $u\in S_1^1$ and $v\in S_2^2$ such that $u\sim v$. Let $a_2\in N_A(v)$. Then $\{b_3,b_4,a_3,a_2,v,u\}$ induces a $P_2\cup P_4$, a contradiction.
\end{proof}

\noindent(P6) Suppose $\omega = 4$. If $S_i^p$ is not anticomplete to $S_j^q$, then $S_i^p$ is anticomplete to $S_j^{q'}$ and $S_{j'}^q$ for pairwise distinct $p, q, q' \in [4]$ and pairwise distinct $i, j, j' \in [4]$.
\begin{proof}
By symmetry, it suffices to show that if $S_1^1$ is not anticomplete to $S_2^2$, then $S_1^1$ is anticomplete to $S_2^3$ and $S_3^2$. 
Let $x \in S_1^1$ and $y \in S_2^2$ with $x \sim y$, and let $a_1 \in N_A(x)$, $b_1 \in N_B(x)$, $a_2 \in N_A(y)$, and $b_2 \in N_B(y)$. 

If $b_3$ is anticomplete to $\{a_3,a_4\}$, then $|[\{b_3,b_4\},\{a_1,a_2\}]|=2$ and $|[\{a_3,a_4\},\{b_1,b_2\}]|=~2$. Otherwise, if $|[\{b_3,b_4\},\{a_1,a_2\}]|\leq 1$, then there exist $a_i\in \{a_1,a_2\}$ and $a_j\in \{a_3,a_4\}$ such that both $a_i$ and $a_j$ are 
anticomplete to $\{b_3,b_4\}$, and hence $\{b_3,b_4,a_j,a_i,x,y\}$ induces a $P_2\cup P_4$, a contradiction. Similarly, if $|[\{a_3,a_4\},\{b_1,b_2\}]|\leq 1$, then there exists $b_p\in \{b_1,b_2\}$ anticomplete to $\{a_3,a_4\}$, and hence $\{a_3,a_4,b_3,b_p,x,y\}$ induces a $P_2\cup P_4$, a contradiction.

If $b_3$ is not anticomplete to $\{a_3,a_4\}$, then since $\{x,y,b_3,b_4,a_3,a_4\}$ cannot induce a $P_2\cup P_4$, it follows that $|[\{b_3,b_4\},\{a_3,a_4\}]|=2$. 

We now show that $S_1^1$ is anticomplete to $S_2^3$; the argument for $S_1^1$ being anticomplete to $S_3^2$ is analogous.  Suppose, to the contrary, that there exists a vertex $z\in S_2^3$ such that $x\sim z$. If $|[\{b_3,b_4\},\{a_1,a_2\}]|=2$ and $|[\{a_3,a_4\},\{b_1,b_2\}]|=2$, then $\{a_3,a_4\}$ is anticomplete to $b_4$ and $\{a_1,a_2\}$ is anticomplete to $b_2$. Moreover, there exist vertices $u\in \{a_3,a_4\}$ nonadjacent to $b_2$ and  $v\in \{a_1,a_2\}$ nonadjacent to $b_4$. Thus, $\{b_2,b_4,u,v,x,z\}$ induces a $P_2\cup P_4$, a contradiction. 
If $|[\{b_3,b_4\},\{a_3,a_4\}]|=2$, then $\{a_1,a_2\}$ is anticomplete to $b_4$ and $\{a_3,a_4\}$ is anticomplete to $b_2$. Moreover, there exist vertices $u\in \{a_3,a_4\}$ nonadjacent to $b_4$ and $v\in \{a_1,a_2\}$ nonadjacent to $b_2$. Therefore, $\{b_2,b_4,u,v,x,z\}$ induces a $P_2\cup P_4$, a contradiction.
\end{proof}

\noindent(P7) Suppose $\omega = 4$. Let $S'$ be the union of all stable sets $S_i^j$ in $S$. Then the vertex set of each component in $G[S^\prime]$ can be partitioned into at most four stable sets, each contained in a distinct $S_i^j$, with all $i$’s and all $j$’s being distinct within that component.
\begin{proof}
This follows directly from (P4) and (P6).
\end{proof}

\noindent(P8) If $\omega\geq5$, then $S_i^p$ is anticomplete to $S_j^q$ for distinct $p,q \in [k]$ and distinct $i,j \in [\omega]$.
\begin{proof}
By symmetry, it suffices to show that $S_1^1$ is anticomplete to $S_2^2$. Suppose to the contrary that there exists vertices $x\in S_1^1$ and $y\in S_2^2$ such that $x\sim y$. Let $b_1\in N_B(x)$. By \eqref{E2}, $[\{a_3,\ldots,a_\omega\},\{b_3,\ldots,b_k\}]$ is a matching.  Since $\omega\ge 5$, $k\ge 4$ and $G$ is $(P_2\cup P_4)$-free, Lemma~\ref{L0} implies that $\{a_3,\ldots,a_\omega\}$ is anticomplete to $\{b_3,\ldots,b_k\}$. Since $\omega\geq 5$, there exist vertices $a_i,a_j\in \{a_3,\ldots,a_\omega\}$ that are anticomplete to $b_1$. Consequently, $\{a_i,a_j,b_3,b_1,x,y\}$ induces a $P_2\cup P_4$, a contradiction.
\end{proof}

\section{The optimal $\chi$-bound}

In this section, we use the notation introduced in Section~3.

\begin{proof}[\normalfont\textbf{Proof of Theorem 1.1}]

Let $G$ be a ($P_2 \cup P_4$, HVN)-free graph with $\omega(G) \ge 2$. 
By Lemma~\ref{LP2P4}, it suffices to prove the theorem for $\omega(G) \ge 4$. 
Moreover, by Lemma~\ref{LH3}, we may further assume that $k \ge 4$.

Recall that, by Lemma~\ref{LH2}, each of the sets $R_j$, $S_i^j$, $T_i$, and $Z$ induces a perfect graph. 
Hence, each of them can be properly coloured using at most $\omega$ colours. Moreover, by Claim~\ref{CH1}, we may assume that for every vertex $v\in V(A)$, $N_B(v)$ intersects at most one part of $B$. 
We first show that $G$ is $\lceil \frac{4}{3}\omega \rceil$-colourable by considering two cases according to the value of $\omega$.

\medskip
\noindent\textbf{Case 1.} $\omega =4$.
\medskip

In this case, $k = 4$. We show that $G$ admits a $6$-colouring. 
\smallskip

We first colour $V(A)$.
\begin{itemize}
\item Colour $A_i$ with colour $i$ for $i \in [4]$.
\end{itemize}

Then we colour $V(B)$ and $S$, distinguishing according to whether some $S_i^j$ is stable.

\medskip
If there exists a set $S_i^j$ that is not stable, we may assume without loss of generality that $S_1^1$ is not stable. 
By (P5), $A_2 \cup A_3 \cup A_4$ is anticomplete to $B_2 \cup B_3 \cup B_4$. 
We colour $V(B)$ and $S$ as follows.
\begin{itemize}
\item Colour $B_1$ with colour $5$, and colour $B_2$, $B_3$, and $B_4$ with colours $2$, $3$, and $4$, respectively.
\item By (P5), colour each $S_i^j$ that is not stable with at most 4 colours from $[6] \setminus (c(A_i) \cup c(B_j))$.
\item By (P5) and (P7), colour each component of $G[S^\prime]$ (recall that $S^\prime$ is the union of all stable sets $S_i^j$ in $S$) with at most four suitable colours from $[6]$. 
To justify this, we construct a bipartite graph that demonstrates the existence of such a colouring. 
Let $X$ denote the set of stable sets in a component of $G[S^\prime]$ as described in (P7). 
For each $X_t \in X$, let $Y_t \subseteq [6]$ be the set of colours that can be assigned to $X_t$; 
for example, if $X_t \subseteq S_i^j$, then $Y_t = [6] \setminus (c(A_i) \cup c(B_j))$. 
Let 
\[
Y = \bigcup_{X_t \in X} Y_t.
\]
We now regard $X$ and $Y$ as the two vertex sets of a bipartite graph $Q$ with bipartition $(X, Y)$. 
Each vertex $X_t\in X$ represents a stable set in the component, and each vertex $y\in Y$ represents a colour in $[6]$. 
Vertices $X_t$ and $y$ are adjacent in $Q$ if and only if $y\in Y_t$; equivalently, $N_Q(X_t)=Y_t$ for all $X_t\in X$. 
By Lemma~\ref{HALL} applied to $Q$, there exists a matching saturating $X$.
\end{itemize}

If every $S_i^j$ is stable, we colour $V(B)$ and $S$ as follows.
\begin{itemize}
\item Colour $B_1$ with colours from $\{1,2\}$ and $B_2$ with colours from $\{3,4\}$. 
This is possible since, by Lemma~\ref{LH1}, the neighbours of each vertex in $B_i$ intersect at most one part of $A$. Hence we may assign one colour to the vertices in $B_i$ having no neighbour in the part of $A$ that receives the same colour, and the other colour to those having such a neighbour. Finally, colour $B_3$ and $B_4$ with colours $5$ and $6$, respectively.
\item By (P7), colour each component of $G[S]$ with at most four suitable colours from $[6]$. 
The same argument used for $G[S^\prime]$ in the case where some $S_i^j$ is not stable applies 
here as well.
\end{itemize}

Finally, we colour $Z$, $R$, and $T$.
\begin{itemize}
\item By (P1), colour $Z$ with at most 4 colours from $[6]$.
\item By (P2), colour $R_i$ with at most 4 colours from $[6]\setminus c(B_i)$ for $i \in [4]$.
\item By (P3), colour $T_i$ with at most 4 colours from $[6] \setminus c(A_i)$ for $i \in [4]$.
\end{itemize}

\noindent\textbf{Case 2.} $\omega \geq 5$.
\medskip

\noindent\textbf{Case 2.1. } There exists a set $S_i^j$ that is not stable.
\medskip

Without loss of generality, we may assume that $S_1^1$ is not stable. By (P5), $A_2 \cup \cdots \cup A_\omega$ is anticomplete to $B_2 \cup \cdots \cup B_k$. We colour $G$ as follows.

\begin{itemize}
\item Colour $A_i$ with colour $i$ for $i\in [\omega]$.
\item Colour $B_1$ with colour $\omega +1$ and $B_i$ with colour $i$ for $i\in [k]\setminus \{1\}$.
\item By (P4) and (P8), colour each $S_i^j$ with at most $\omega$ colours from $\lceil\frac{4}{3}\omega\rceil\setminus (c(A_i)\cup c(B_j))$ for $i\in [\omega]$ and $j\in [k]$.
\item By (P1), colour $Z$ with at most $\omega$ colours from $\lceil\frac{4}{3}\omega\rceil$.
\item By (P2), colour $R_i$ with at most $\omega$ colours from $\lceil\frac{4}{3}\omega\rceil\setminus c(B_i)$ for $i\in [k]$.
\item By (P3), colour $T_i$ with at most $\omega$ colours from $\lceil\frac{4}{3}\omega\rceil\setminus c(A_i)$ for $i\in [\omega]$.
\end{itemize}

\noindent\textbf{Case 2.2. } Every $S_i^j$ is stable.
\medskip

We first colour $V(A)$ as follows.
\begin{itemize}
    \item For $1 \leq i \leq \omega - \lceil \frac{1}{3}\omega \rceil$, colour $A_i$ with colour $i$.
    \item For $\omega - \lceil \frac{1}{3}\omega \rceil + 1 \leq i \leq \omega$, colour $A_i$ according to the following rules:
    \begin{itemize}
        \item If $B_i \neq \emptyset$ (equivalently, $i \leq k$), then colour the vertices in $A_i$ that has a neighbour in $B_i$ with colour $i + \lceil \frac{1}{3}\omega \rceil$, and colour those that are anticomplete to $B_i$ with colour $i$.
        \item If $B_i = \emptyset$ (equivalently, $i > k$), then colour $A_i$ with colour $i$.
    \end{itemize}
\end{itemize}

Then we colour $V(B)$ as follows.
\begin{itemize}
\item For $1 \leq i \leq \lceil \frac{1}{3}\omega \rceil$, colour $B_i$ with colour $\omega + i$, since $B_i$ is anticomplete to those vertices in $V(A)$ that have received colour $\omega + i$.
\item For $\omega - \lceil \frac{1}{3}\omega \rceil + 1 \leq i \leq \omega$, colour $B_i$ with colour $i$, since $B_i$ is anticomplete to those vertices in $V(A)$ that have received colour $i$.
\item For $\lceil \frac{1}{3}\omega \rceil + 1 \leq i \leq \omega - \lceil \frac{1}{3}\omega \rceil$, colour each $B_i$ with at most two colours from $\{1,2,\ldots,\omega - \lceil \frac{1}{3}\omega \rceil\}$, ensuring that distinct $B_i$'s receive pairwise disjoint colour sets.
This is possible because the colours in $\{1,2,\ldots,\omega - \lceil \frac{1}{3}\omega \rceil\}$ have not been used on the other parts of $V(B)$, 
and the number of colours that may be required in this step does not exceed the number of colours in $\{1,2,\ldots,\omega - \lceil \frac{1}{3}\omega \rceil\}$.

Indeed, the maximum number of colours that may be needed here is
\[
    2(\omega - \lceil \frac{1}{3}\omega \rceil - (\lceil \frac{1}{3}\omega \rceil + 1) + 1),
\]
while the number of available colours is
\[
    \omega - \lceil \frac{1}{3}\omega \rceil.
\]
Since
\[
    2(\omega - \lceil \frac{1}{3}\omega \rceil - (\lceil \frac{1}{3}\omega \rceil + 1) + 1)
    \leq \omega - \lceil \frac{1}{3}\omega \rceil,
\]
the colouring is therefore valid.
\end{itemize}

Finally, we colour $S$, $Z$, $R$, and $T$ as follows.
\begin{itemize}
\item By (P4) and (P8), colour each $S_i^j$ with one colour from $\lceil\frac{4}{3}\omega\rceil\setminus (c(A_i)\cup c(B_j))$ for $i\in [\omega]$ and $j\in [k]$.
\item By (P1), colour $Z$ with at most $\omega$ colours from $\lceil\frac{4}{3}\omega\rceil$.
\item By (P2), colour $R_i$ with at most $\omega$ colours from $\lceil\frac{4}{3}\omega\rceil\setminus c(B_i)$ for $i\in [k]$.
\item By (P3), colour $T_i$ with at most $\omega$ colours from $\lceil\frac{4}{3}\omega\rceil\setminus c(A_i)$ for $i\in [\omega]$.
\end{itemize}

Next, we prove that the bound is optimal when $\omega \geq 4$. 
\medskip

We consider a graph $G$ of order $2\omega^2$, with $V(G) = V(A) \cup V(B)$, 
where $A$ and $B$ are defined as before. 
Specifically, both $A$ and $B$ are induced complete $\omega$-partite subgraphs of $G$ 
satisfying $|V(A)| = |V(B)| = \omega^2$, 
and each part of $A$ and of $B$ contains exactly $\omega$ vertices. 
For $i \in [\omega]$, define
\[
A_i = \{a_i^1, a_i^2, \ldots, a_i^\omega\}
\quad \text{and} \quad
B_i = \{b_i^1, b_i^2, \ldots, b_i^\omega\}.
\]
We define the edges between $V(A)$ and $V(B)$ as follows.
for each $i, j \in [\omega]$,
\[
N_B(a_j^i) = \{b_i^j\}
\quad \text{and} \quad
N_A(b_i^j) = \{a_j^i\}.
\]
Thus, for each $i \in [\omega]$, every vertex of $A_i$ has exactly one neighbour in $V(B)$, 
and the neighbours of distinct vertices in $A_i$ are in distinct parts $B_1, B_2, \ldots, B_\omega$. 
Similarly, for each $i \in [\omega]$, every vertex of $B_i$ has exactly one neighbour in $V(A)$, 
and the neighbours of distinct vertices in $B_i$ are in distinct parts $A_1, A_2, \ldots, A_\omega$. 
Note that $[V(A), V(B)]$ is a perfect matching of $G$. Moreover, one can verify that $G$ is ($P_2\cup P_4$, HVN)-free.

Suppose, to the contrary, that $G$ is $(\lceil \frac{4}{3}\omega \rceil - 1)$-colourable. 
Since $A$ is a complete $\omega$-partite graph, at least $\omega$ colours from $[\lceil \frac{4}{3}\omega \rceil - 1]$ must be used on $V(A)$, and distinct parts of $A$ must receive pairwise disjoint sets of colours. 
Hence, there exist at least 
\[
\omega - \big(\lceil \frac{4}{3}\omega \rceil - 1 - \omega\big) = \omega - \lceil \frac{1}{3}\omega \rceil + 1
\]
parts of $A$ that are assigned only one colour. Without loss of generality, assume that all vertices of $A_i$ are assigned colour $i$ for $i \in [\omega - \lceil \frac{1}{3}\omega \rceil + 1]$. 
Hence, for each $j \in [\omega]$,
\[
[\omega - \lceil \frac{1}{3}\omega \rceil + 1] \subseteq c(N_A(B_j)).
\]
Similarly, there exist at least $\omega - \lceil \frac{1}{3}\omega \rceil + 1$ parts of $B$ 
that are assigned only one colour. 
Thus, these parts of $B$ cannot be assigned any colours from 
$[\omega - \lceil \frac{1}{3}\omega \rceil + 1]$. 
Hence, the number of remaining colours that are available for these parts of $B$ is
\[
(\lceil \frac{4}{3}\omega \rceil - 1) - (\omega - \lceil \frac{1}{3}\omega \rceil + 1)
= 2\lceil \frac{1}{3}\omega \rceil - 2.
\]
Since
\[
2\lceil \frac{1}{3}\omega \rceil-2<\omega -\lceil \frac{1}{3}\omega \rceil+1,
\]
we obtain a contradiction.

This completes the proof of Theorem~\ref{THM2}
\end{proof}

\section*{Declaration of competing interest}

The authors declare that they have no known competing financial interests or personal relationships that could have appeared to influence the work reported in this paper.

\section*{Data availability}

No data was used for the research described in the article.

\end{document}